\theoremstyle{plain}
\numberwithin{equation}{section}
\newtheorem{theorem}[equation]{Theorem}
\newtheorem{proposition}[equation]{Proposition}
\newtheorem{example}[equation]{Example}
\newtheorem{corollary}[equation]{Corollary}
\newtheorem{remark}[equation]{Remark}
\newtheorem{openprob}[equation]{Open problem}
\newtheorem{definition}[equation]{Definition}
\begin{document}

\begin{frontmatter}

\title{Extensions of rich words}

\author{Jetro Vesti}
\ead{jejove@utu.fi}


\address{University of Turku, Department of Mathematics and Statistics, 20014 Turku, Finland}

\begin{abstract}  
In \cite{djp}, it was proved that every word $w$ has at most $|w|+1$
many distinct palindromic factors, including the empty word.
The unified study of words which achieve this limit was initiated in \cite{gjwz}.
They called these words \emph{rich} (in palindromes).

This article contains several results about rich words and especially extending them.
We say that a rich word $w$ can be \emph{extended richly} with a word $u$ if $wu$ is rich.
Some notions are also made about the infinite defect of a word, the number of rich words of length $n$
and two-dimensional rich words.
\end{abstract}

\begin{keyword} Combinatorics on words, Palindromes, Rich words, Sturmian words, Defect, Two-dimensional words.
\MSC 68R15
\end{keyword}

\journal{x}

\end{frontmatter}


\section{Introduction}

In \cite{djp}, it was proved that every word $w$ has at most $|w|+1$ many distinct palindromic factors, including the empty word.
The authors of \cite{gjwz} recently initiated a unified study of words which achieve this limit. They called these words \emph{rich} (in palindromes).
The same concept was introduced already in \cite{bhnr} with the term \emph{full} words. Full, meaning that a bound is reached.
This class of words have been studied in several other papers from various point of views,
for example in \cite{afmp}, \cite{bdgz1}, \cite{bdgz}, \cite{lgz} and \cite{rr}.

In Section 2 we will prove several results about extending rich words.
A rich word $w$ can be \emph{extended richly} with a word $u$ if $wu$ is rich.
In \cite{gjwz} it was proved that every rich word can be extended at least with one letter.
We will prove that every rich word $w$ can be eventually extended richly at least in two ways,
after it had been extended at most with a word of length $2|w|$. This fact will be used in
several places later. We will also show that every rich word can be extended to be both
an infinite aperiodic and infinite periodic rich word. Also, all Sturmian words can be extended richly in two ways.

In Section 3 we will define a new concept, the \emph{infinite defect} of a finite word.
The normal \emph{defect} of a word $w$ tells how many palindromes does it lack.
The infinite defect tells how many palindromes does the infinite extension of $w$
has to lack in minimum. We will notice that this number is always finite.
We will also give some upper bounds and study how the defect and the infinite defect
can differ from each other. 

In Section 4 we will give upper and lower bounds for the number of rich words of length $n$.
Clearly all words are not rich, so it is natural to study how many of them exist. This has not yet been done.
The upper bound has no theory behind it, but it seems to be quite near to the actual number of rich words.
For the lower bound we will use the fact that every rich word could be extended at least in two different ways after a limited extension.

In Section 5 we will shortly study two-dimensional rich words and their extensions.

In Section 6 we will give some open problems from the previous sections.

\subsection{Definitions and notation}

An \emph{alphabet} $A$ is a non-empty finite set of symbols, called \emph{letters}.
A \emph{word} is a finite sequence of letters from $A$.
The \emph{empty} word $\epsilon$ is the empty sequence.
The set $A^*$ of all finite words over $A$ is a \emph{free monoid} under the operation of concatenation.
The \emph{free semigroup} $A^+=A^*\setminus \{\epsilon\}$ is the set of non-empty words over $A$.

A \emph{right infinite word} (resp. \emph{left}) is a sequence indexed by $\mathbb{Z}_+$
(resp. $\mathbb{Z}_-$) with values in $A$.
A \emph{two-way infinite word} is a sequence indexed by $\mathbb{Z}$.
We denote the set of all infinite words over $A$ by $A^\omega$ and define $A^\infty=A^*\cup A^\omega$.
An infinite word is \emph{ultimately periodic} if it can be written as $uv^\infty=uvvv\cdots,$
for some $u,v\in A^*,$ $v\neq\epsilon$. If $u=\epsilon,$ then we say the infinite word is \emph{periodic}.
An infinite word that is not ultimately periodic is \emph{aperiodic}.

The \emph{length} of a word $w=a_1 a_2 \ldots a_n \in A^+$, with each $a_i\in A$, is denoted by $|w|=n$.
The empty word $\epsilon$ is the unique word of length $0$. By $|w|_a$ we denote the number of occurences of a letter $a$ in $w$. 
The \emph{reversal} of $w$ is denoted by $\bar{w}=a_n\ldots a_2 a_1$.
Word $w$ is called a \emph{palindrome} if $w=\bar{w}$.
The empty word $\epsilon$ is assumed to be a palindrome.

A word $x$ is a \emph{factor} of a word $w\in A^\infty$ if $w=uxv$, for some $u,v\in A^\infty$.
If $u=\epsilon$ ($v=\epsilon$) then we say that $x$ is a \emph{prefix} (resp. \emph{suffix}) of $w$.
A factor $x$ of a word $w$ is said to be \emph{unioccurent} in $w$ if $x$ has exactly one occurence in $w$.
Two occurences of factor $x$ are said to be \emph{consecutive} if there is no occurence of $x$ between them.
A factor of $w$ having exactly two occurences of a non-empty factor $u$, the other as a prefix and the other as a suffix,
is called a \emph{complete return} to $u$ in $w$.

If $w=uv\in A^+$, by notation $u^{-1}w=v$ or $wv^{-1}=u$ we mean the removal of a prefix or a suffix of $w$.
The \emph{right palindromic closure} (resp. \emph{left}) of a word $w$ is the unique shortest palindrome
$w^{(+)}$ (resp. $^{(+)}w$) having $w$ as a prefix (resp. suffix). If $u$ is the (unique) longest palindromic
suffix of $w=vu$ then $w^{(+)}=vu\bar{v}$.

Let $w$ be a finite or infinite word. The set $\textrm{F}(w)$ means the set of all factors of $w$,
the set $\textrm{Alph}(w)$ means the set of all letters that occur in $w$ and
the set $\textrm{Pal}(w)$ means the set of all palindromic factors of $w$.
We say that a word $w$ is \emph{unary} if $|\textrm{Alph}(w)|=1$, \emph{binary} if $|\textrm{Alph}(w)|=2$,
\emph{ternary} if $|\textrm{Alph}(w)|=3$ and \emph{$n$-ary} if $|\textrm{Alph}(w)|=n$

Other basic definitions and notation in combinatorics on words can be found from Lothaires books \cite{l1} and \cite{l2}. 

\subsection{Basic properties of rich words}

In this subsection we give the basic definitions and state some already known
properties and characterizations of rich words.

\begin{proposition}\label{p0}(\cite{djp}, Prop. 2)
A word $w$ has at most $|w|+1$ distinct palindromic factors.
\end{proposition}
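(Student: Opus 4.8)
The plan is to induct on the length of $w$, appending one letter at a time and showing that each new letter creates at most one new palindromic factor. Write $w=a_1a_2\cdots a_n$ with each $a_i\in A$, and let $w_i=a_1\cdots a_i$ denote the prefix of length $i$, with $w_0=\epsilon$. The core claim I would prove is that $|\textrm{Pal}(w_i)|\le|\textrm{Pal}(w_{i-1})|+1$ for every $i\ge 1$. Since $\textrm{Pal}(w_0)=\{\epsilon\}$ has exactly one element, chaining these $n$ inequalities yields $|\textrm{Pal}(w)|=|\textrm{Pal}(w_n)|\le 1+n=|w|+1$, which is the assertion (the base case $|w|=0$ being immediate).

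To prove the claim, observe that any palindromic factor of $w_i$ that is not already a factor of $w_{i-1}$ must involve the final letter $a_i$, hence must be a suffix of $w_i$; so it suffices to show that among the palindromic suffixes of $w_i$, all but the longest one already occur inside $w_{i-1}$. Note that the palindromic suffixes of a fixed word are linearly ordered by length (indeed by the suffix relation), so ``the longest'' is well defined. Let $p$ be the longest palindromic suffix of $w_i$ and let $q$ be a strictly shorter palindromic suffix of $w_i$. Since $q$ is a suffix of the palindrome $p$, its reversal $\bar q=q$ is a prefix of $p$; thus $q$ occurs in $w_i$ as a prefix of the (suffix) occurrence of $p$. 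Because $|q|<|p|$, this occurrence of $q$ ends strictly before the last position of $w_i$, so it lies entirely within $w_{i-1}$. Hence $q\in\textrm{F}(w_{i-1})$, establishing the claim.

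The argument is essentially self-contained, and the only delicate point — the main obstacle, such as it is — is the elementary observation that a proper palindromic suffix of a palindrome is also a prefix of it, together with the positional bookkeeping showing that its leftmost occurrence inside $p$ avoids the newly appended letter $a_i$. No separate treatment of infinite words is needed for the proposition as stated; if one wanted an analogous statement for an infinite $w$, it would follow by noting $\textrm{Pal}(w)=\bigcup_i\textrm{Pal}(w_i)$ and applying the finite bound to each prefix.
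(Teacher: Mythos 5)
Your proof is correct and is precisely the classical argument of Droubay, Justin and Pirillo; the paper itself states this proposition only as a citation to \cite{djp} without reproducing a proof, so there is nothing to diverge from. The key step --- that every palindromic suffix of $w_i$ other than the longest one already occurs strictly earlier, because a proper palindromic suffix of a palindrome is also a prefix of it --- is exactly the standard unioccurrence argument underlying Proposition \ref{p2} as well.
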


\begin{definition}
A word $w$ is \emph{rich} if it has exactly $|w|+1$ distinct palindromic factors.
\end{definition}

\begin{definition}
An infinite word is \emph{rich} if all of its factors are rich.
\end{definition}

\begin{proposition}\label{p1}(\cite{gjwz}, Cor. 2.5)
A word $w$ is rich if and only if all of its factors are rich.
\end{proposition}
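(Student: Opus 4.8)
The plan is to prove the forward implication by reducing it to a single‑letter deletion. Concretely, it suffices to show: if $va$ is rich for a word $v$ and a letter $a\in A$, then $v$ is rich. Once this is known, the reversal symmetry $\textrm{Pal}(\bar x)=\{\,\bar p : p\in\textrm{Pal}(x)\,\}$ (so $|\textrm{Pal}(\bar x)|=|\textrm{Pal}(x)|$ and hence $x$ is rich iff $\bar x$ is) immediately gives the ``remove the first letter'' version as well. Then, for an arbitrary factor $u$ of a rich word $w$, write $w=xuy$ and pass from $w$ down to $u$ by deleting the last letter repeatedly until $xu$ is reached, then the first letter repeatedly until $u$ is reached; each deletion preserves richness, so by induction on $|w|-|u|$ the factor $u$ is rich. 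The backward implication is trivial, since $w$ is a factor of itself.

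The engine of the single‑letter step is the inequality $|\textrm{Pal}(va)|\le |\textrm{Pal}(v)|+1$, which is exactly the mechanism underlying Proposition~\ref{p0}. I would recall its proof: any palindrome in $\textrm{Pal}(va)\setminus\textrm{Pal}(v)$ must be a palindromic suffix of $va$, and among the palindromic suffixes of any word $y$ only the \emph{longest} one can possibly fail to occur earlier in $y$. Indeed, if $s$ is a palindromic suffix of $y$ strictly shorter than the longest palindromic suffix $t$ of $y$, then $s$ is a suffix of $t$; since $t$ is a palindrome, $\bar s=s$ is also a prefix of $t$, and these two occurrences of $s$ inside $t$ have different starting positions because $|s|<|t|$. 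Hence $s$ already occurs at least twice in the suffix $t$ of $y$, so $s$ is not a new factor. Therefore passing from $v$ to $va$ adds at most one palindromic factor.

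The conclusion now follows by a squeezing argument: if $va$ is rich then
\[
|v|+2=|va|+1=|\textrm{Pal}(va)|\le |\textrm{Pal}(v)|+1\le (|v|+1)+1=|v|+2,
\]
so equality holds throughout and $|\textrm{Pal}(v)|=|v|+1$, i.e.\ $v$ is rich. Combining this with the reversal remark and the deletion induction described above shows that every factor of a rich word is rich, completing the proof. The only point that needs genuine care is the claim that among the palindromic suffixes of a word only the longest can be unioccurrent — that is, the distinctness of the two copies of $s$ inside $t$; everything else is bookkeeping. If one prefers not to reprove the inequality of Proposition~\ref{p0}, one may simply cite it and keep only the squeezing step together with the reversal and deletion arguments.
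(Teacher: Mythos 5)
Your proof is correct. The paper does not actually prove this statement --- it is imported directly from \cite{gjwz} (Cor.\ 2.5) --- so there is no in-paper argument to compare against; what you give is essentially the standard proof from the literature: the single-letter-deletion step via the inequality $|\textrm{Pal}(va)|\le|\textrm{Pal}(v)|+1$ (only the longest palindromic suffix can be unioccurrent, since any shorter palindromic suffix $s$ of the lps $t$ reappears as a prefix of $t$ and hence already lies in $v$), the squeeze to force equality, the reversal symmetry, and the induction on $|w|-|u|$. All steps check out, including the only delicate one: the prefix occurrence of $s$ in $t$ ends strictly before the last position of $va$ because $|s|<|t|$, so $s$ is genuinely a factor of $v$ and not a new palindrome.
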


\begin{proposition}\label{p2}(\cite{gjwz}, Cor. 2.5)
If $w$ is rich, then it has exactly one unioccurent longest palindromic suffix (referred later as  ${\rm lps}$ or ${\rm lps}(w)$).
\end{proposition}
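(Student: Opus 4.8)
The plan is to reduce everything to a single-letter extension step. Write $w=va$ with $a\in A$ the last letter (the case $w=\epsilon$ being trivial, since then ${\rm lps}(w)=\epsilon$, which plainly occurs once). By Proposition~\ref{p1} the prefix $v$ is again rich, so $|{\rm Pal}(v)|=|v|+1=|w|$ while $|{\rm Pal}(w)|=|w|+1$; hence ${\rm Pal}(w)$ contains exactly one palindrome that is not already in ${\rm Pal}(v)$. The whole proposition will follow once that extra palindrome is identified as ${\rm lps}(w)$ and shown to be unioccurent. Note also that the empty word is always a palindromic suffix, so the set of palindromic suffixes of $w$ is nonempty and, since two suffixes of equal length coincide, it has a unique longest element; this already makes ${\rm lps}(w)$ well defined.

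First I would establish the elementary fact about appending one letter: every palindromic factor of $w=va$ which is not a suffix of $w$ already occurs in $v$ (all of its occurrences end strictly before position $|w|$), and among the palindromic \emph{suffixes} of $w$ only the longest one, $s:={\rm lps}(w)$, can fail to occur in $v$. For the latter claim, if $p$ is a palindromic suffix of $w$ with $|p|<|s|$, then $p$ is a suffix of $s$; since $s=\bar s$, this means $p=\bar p$ is a prefix of $s$. That prefix-occurrence of $p$ inside $w$ occupies the positions $|w|-|s|+1,\dots,|w|-|s|+|p|$, and $|p|<|s|$ gives $|w|-|s|+|p|<|w|$, so this occurrence ends before the last letter and hence lies inside $v$; thus $p\in{\rm Pal}(v)$. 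Consequently ${\rm Pal}(w)\setminus{\rm Pal}(v)\subseteq\{s\}$, which in particular reproves the bound $|{\rm Pal}(w)|\le|{\rm Pal}(v)|+1$.

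Combining this with the count above, the unique new palindrome must be $s={\rm lps}(w)$, i.e.\ $s\notin{\rm Pal}(v)$. It remains to see that this forces $s$ to be unioccurent in $w$: any occurrence of $s$ in $w$ that ends before the last position lies inside $v$, contradicting $s\notin{\rm Pal}(v)$, while the only occurrence ending at the last position is $s$ read off as the suffix of $w$ of length $|s|$. Hence $s$ occurs exactly once in $w$, and since ${\rm lps}(w)$ is the unique longest palindromic suffix, the proposition is proved.

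The one point requiring genuine care is the elementary fact in the second paragraph — essentially the heart of Proposition~\ref{p0} — where one must argue precisely about the positions of occurrences of the palindromic suffixes and use the self-reversal of $s$; after that, the argument is just bookkeeping with the palindrome counts supplied by richness and Proposition~\ref{p1}.
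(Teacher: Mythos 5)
Your proof is correct. The paper does not prove this proposition itself --- it only cites \cite{gjwz}, Cor.~2.5 --- and your argument is exactly the standard one underlying that result: the Droubay--Justin--Pirillo observation that when a letter is appended, the only candidate for a new palindrome is the longest palindromic suffix, combined with the richness count $|{\rm Pal}(w)|=|{\rm Pal}(v)|+1$ to force that candidate to be genuinely new, hence unioccurent.
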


From Cor. 2.5 in \cite{gjwz} we also get that if $w$ is rich then $\bar{w}$ is rich.
From this we see that the above proposition holds for prefixes also and we refer to
the unioccurent longest palindromic prefix of $w$ as $\textrm{lpp}$ or $\textrm{lpp}(w)$.

The next proposition characterizes rich words according to complete return words.

\begin{proposition}\label{p3}(\cite{gjwz}, Thm. 2.14)
A finite or infinite word $w$ is rich if and only if
all complete returns to any palindromic factor in $w$ are themselves palindromes.
\end{proposition}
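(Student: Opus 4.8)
The plan is to prove the two implications separately, leaning on Propositions~\ref{p1} and~\ref{p2} together with one elementary observation about appending a single letter. First I would record that observation: for any word $v$ and letter $a$, every palindromic suffix of $va$ other than $\mathrm{lps}(va)$ already occurs in $v$. Indeed, such a shorter palindromic suffix $t$ is in particular a palindromic suffix of the palindrome $\mathrm{lps}(va)$, hence (reversing) also a prefix of $\mathrm{lps}(va)$, and this prefix copy of $t$ lies entirely inside the length-$|v|$ prefix $v$ of $va$. Since any palindrome in $\mathrm{Pal}(va)\setminus\mathrm{Pal}(v)$ must be a suffix of $va$, it follows that $\mathrm{Pal}(va)\setminus\mathrm{Pal}(v)\subseteq\{\mathrm{lps}(va)\}$; consequently, when $v$ is rich, $va$ is rich if and only if $\mathrm{lps}(va)$ occurs exactly once in $va$. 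I would also note that the property ``every complete return to a palindromic factor is a palindrome'' is inherited by every factor of $w$ (being a complete return to a subfactor, and being a palindrome, are both intrinsic to a word), which together with Proposition~\ref{p1} and the definition of infinite richness reduces the statement to finite $w$.

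For the forward direction, let $w$ be rich, $u$ a palindromic factor, and $r$ a complete return to $u$. By Proposition~\ref{p1} the factor $r$ is rich, so by Proposition~\ref{p2} its longest palindromic suffix $q$ is unioccurrent in $r$; moreover $|q|\ge|u|$ since $u$ is a palindromic suffix of $r$, and $|r|>|u|$ since $u$ occurs twice in $r$. Suppose, for contradiction, that $q$ is a proper suffix of $r$. If $|q|=|u|$ then $q=u$, contradicting that $q$ is unioccurrent while $u$ occurs twice. If $|q|>|u|$, then $u$, being a suffix of the palindrome $q$, is also a prefix of $q$; reading this prefix copy of $u$ inside the suffix copy of $q$ in $r$ produces an occurrence of $u$ whose starting position lies strictly between those of the prefix and suffix occurrences of $u$ in $r$, i.e. a third occurrence of $u$, contradicting the definition of a complete return. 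Hence $q=r$ and $r$ is a palindrome.

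For the converse, assume every complete return to every palindromic factor of $w$ is a palindrome and induct on $|w|$, the case $w=\epsilon$ being clear. Write $w=va$ with $a$ a letter. As a factor of $w$, the prefix $v$ satisfies the same hypothesis, so $v$ is rich by induction, and by the observation above it suffices to show $q:=\mathrm{lps}(va)$ occurs exactly once in $va$. If not, let $r$ be the factor of $va$ running from the start of the occurrence of $q$ immediately to the left of the suffix occurrence up to the end of $va$; then $r$ is a complete return to $q$, it is a suffix of $va$, and $|r|>|q|$. By hypothesis $r$ is a palindrome, which contradicts the maximality of $\mathrm{lps}(va)$ among palindromic suffixes of $va$. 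Hence $va$ is rich, completing the induction.

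The step I expect to be the main obstacle is the preliminary ``append one letter'' dictionary, namely making precise that for rich $v$ the richness of $va$ is equivalent to $\mathrm{lps}(va)$ being new; once this is in place, both directions reduce to bookkeeping about where the prefix and suffix copies of a palindrome sit inside a complete return. Handling infinite $w$ costs nothing extra, since a complete return is always a finite factor and both the hypothesis and, via Proposition~\ref{p1}, the conclusion restrict to finite factors.
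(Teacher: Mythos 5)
Your proof is correct. Note, however, that the paper does not prove this statement at all: it is imported verbatim from \cite{gjwz} (Thm.~2.14), so there is no in-paper argument to compare against. Your reconstruction follows the standard route of the original source: the forward direction shows the unioccurrent $\mathrm{lps}$ of a rich complete return must be the whole return (any shorter one would force a third occurrence of $u$), and the converse inducts on length using the fact that, for rich $v$, richness of $va$ is equivalent to $\mathrm{lps}(va)$ being unioccurrent; both halves, including the reduction of the infinite case to finite factors, are sound.
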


We can also take the palindromic closure of a rich word and get a rich word.

\begin{proposition}\label{p4}(\cite{gjwz}, Prop. 2.6)
Palindromic closure preserves richness. 
\end{proposition}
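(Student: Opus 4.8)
The plan is to prove that the right palindromic closure $w^{(+)}$ of a rich word $w$ is rich; the statement for the left closure ${}^{(+)}w$ then follows at once, since $\overline{w^{(+)}}={}^{(+)}\bar w$ and reversal preserves richness (as noted after Proposition~\ref{p2}). So I would fix a rich $w$, write $w=vu$ with $u={\rm lps}(w)$, and put $p:=w^{(+)}=vu\bar v$. To show $p$ is rich I would invoke Proposition~\ref{p3}: it suffices to prove that every complete return to every nonempty palindromic factor of $p$ is itself a palindrome.

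First I would record a small lemma: \emph{$u$ occurs exactly once in $p$}. Indeed $u$ is unioccurrent in $w=vu$ by Proposition~\ref{p2}, and, reversing, $u={\rm lpp}(\bar w)$ — the palindromic prefixes of $\bar w=u\bar v$ are the reversals of the palindromic suffixes of $w=vu$, hence of length at most $|u|$ — so $u$ is also unioccurrent in $\bar w$. An occurrence of $u$ in $p$ on positions $i,\dots,i+|u|-1$ is contained in the prefix $vu$ as soon as $i\le|v|+1$, and is contained in the suffix $u\bar v$ as soon as $i\ge|v|+1$; in either case the relevant unioccurrence fact forces it to be the central occurrence starting at position $|v|+1$, which proves the lemma.

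Next I would take a nonempty palindromic factor $q$ of $p$ and a complete return $r$ to $q$ in $p$. If the given occurrence of $r$ is contained in the prefix $vu=w$, then $r$ is a complete return to $q$ inside the rich word $w$, hence a palindrome by Proposition~\ref{p3}; likewise if $r$ lies inside the suffix $u\bar v=\bar w$, which is also rich. The remaining, and I expect only genuinely delicate, case is when $r$ meets $v$, contains all of $u$, and meets $\bar v$, so that $r=\alpha u\bar\beta$ for nonempty suffixes $\alpha,\beta$ of $v$. Since $p$ is a palindrome, $\bar r=\beta u\bar\alpha$ is again a complete return to $\bar q=q$ in $p$, and $r$ is a palindrome iff $\bar r$ is; so I may assume $|\alpha|\le|\beta|$, i.e.\ $\alpha$ is a suffix of $\beta$, say $\beta=\gamma\alpha$. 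The goal becomes to force $\gamma=\epsilon$, since then $r=\alpha u\bar\alpha$ is a palindrome.

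To finish I would argue by contradiction: suppose $\gamma\neq\epsilon$. Then $r=\alpha u\bar\alpha\bar\gamma$, so $P:=\alpha u\bar\alpha$ is a proper prefix of $r$ and $P$ is a palindrome. The factor $q$ cannot contain $u$: since $q$ occurs in $r$ both as a prefix and as a suffix, that would place $u$ at two distinct positions of $r\subseteq p$, contradicting the lemma. Hence $|q|<|P|$ (otherwise the prefix $q$ of $r$ would contain the block $P$, and so $u$), so $q$ is a prefix of the palindrome $P$ and therefore also a suffix of $P$. As $P$ is a prefix of $r$, this yields an occurrence of $q$ in $r$ ending at position $|P|<|r|$ and beginning after position $1$ (because $|q|<|P|$): a third occurrence of $q$ in $r$, contradicting that $r$ is a complete return to $q$. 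Thus $\gamma=\epsilon$, completing the proof. The two ingredients that I expect to carry this hard case are exactly the lemma that ${\rm lps}(w)$ remains unioccurrent in the closure $p$ — which confines $q$ to the palindromic block $P$ — and the elementary remark that a palindromic prefix of a palindrome is also a suffix of it.
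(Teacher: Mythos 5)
Your proof is correct, and it is worth noting that the paper itself offers no argument for this statement at all: Proposition~\ref{p4} is simply quoted from \cite{gjwz} (Prop.~2.6), so there is nothing internal to compare against. What you have written is a genuine, self-contained proof, and it is routed differently from the original one in \cite{gjwz}, which works prefix by prefix through $vu\bar{v'}$ for successive suffixes $v'$ of $v$ and verifies that each step creates a unioccurrent longest palindromic suffix $v'u\bar{v'}$ (i.e.\ it uses the ``new palindrome at every step'' characterization of richness). You instead verify the complete-return characterization (Proposition~\ref{p3}) directly on $p=vu\bar v$. The two nontrivial ingredients you isolate are both sound: the unioccurrence of $u={\rm lps}(w)$ in $p$ does follow from Proposition~\ref{p2} applied to $w$ and to $\bar w$, since any occurrence of $u$ in $p$ lies in the prefix $vu$ or in the suffix $u\bar v$ and is therefore pinned to the central position; and the three cases for a complete return $r$ are exhaustive, because a factor of $p$ contained in neither $vu$ nor $u\bar v$ must straddle the whole central $u$. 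The endgame is also airtight: $q$ cannot contain $u$ (else the two occurrences of $q$ in $r$ would yield two occurrences of $u$ in $p$), hence $q$ sits inside the palindromic prefix $P=\alpha u\bar\alpha$ of $r$, and a palindromic prefix of a palindrome being also a suffix manufactures a forbidden third occurrence of $q$ unless $r=P$. The payoff of your route is a clean standalone proof that the present paper could in principle include; the payoff of the original route is that it additionally exhibits the defect-zero bookkeeping (one new palindrome per appended letter), which is the form in which Proposition~\ref{p5} is extracted from it later in the paper.
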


Let $w$ be a word and $u\neq w$ its longest proper palindromic suffix.
The \emph{proper palindromic closure} of $w=vu$ is defined as $w^{(++)}=vu\bar{v}$.
From the proof of Proposition 2.8 in \cite{gjwz} we get that also the proper palindromic
closure preserves richness. It uses the fact that the longest proper palindromic suffix
(referred later as $\textrm{lpps}(w)$ or $\textrm{lpps}$) can occur only in the beginning
and in the end of the word. Hence we create a new palindrome in every step when we are
taking the proper palindromic closure and the word stays rich.

\begin{proposition}\label{p5}(\cite{gjwz}, proof of Prop. 2.8)
The proper palindromic closure preserves richness.
\end{proposition}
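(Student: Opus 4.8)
The plan is to reduce, wherever possible, to the fact that ordinary palindromic closure preserves richness (Proposition \ref{p4}), and to handle the genuinely new situation directly. If $w$ is rich but is \emph{not} a palindrome, then $\mathrm{lps}(w)$ is a proper suffix of $w$ and therefore coincides with $\mathrm{lpps}(w)$; hence $w^{(++)}=w^{(+)}$ and richness is preserved by Proposition \ref{p4}. The cases $|w|\le 1$ (where $\mathrm{lpps}(w)=\epsilon$) are settled by direct inspection. So I may assume $w$ is a palindrome with $|w|\ge 2$; write $u=\mathrm{lpps}(w)\neq\epsilon$ and $w=vu$ with $v\neq\epsilon$ and $k:=|v|\ge 1$. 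Since $w$ is a palindrome and $u$ is a palindromic suffix of $w$, the word $\bar u=u$ is also a prefix of $\bar w=w$, so $u$ occurs in $w$ both as a prefix and as a suffix.

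The structural heart of the argument is the claim that $u$ has no occurrence in the interior of $w$, i.e.\ $u$ occurs in $w$ only at the starting positions $0$ and $k$. I would prove this from the complete-return characterization (Proposition \ref{p3}). Since $u$ occurs both as a prefix and as a suffix of $w$ and $v\neq\epsilon$, there are at least two occurrences of $u$ in $w$; let $C$ be the complete return to $u$ given by its last two consecutive occurrences, so $C$ is a suffix of $w$ (it ends where $w$ ends). As $w$ is rich, $C$ is a palindrome; moreover the last copy of $u$ inside $C$ begins at a positive offset, so $u$ is a \emph{proper} prefix of $C$ and $|C|>|u|$. But $u=\mathrm{lpps}(w)$ is the longest palindromic \emph{proper} suffix of $w$, so a palindromic suffix of $w$ that is strictly longer than $u$ can only be $w$ itself; hence $C=w$. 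Consequently the last two consecutive occurrences of $u$ are the prefix and suffix occurrences, and there is no occurrence of $u$ between them.

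With the claim available I would realise $w^{(++)}=vu\bar v$ by appending the letters of $\bar v$ to $w$ one at a time, obtaining a chain $w=w_0,w_1,\dots,w_k=w^{(++)}$ in which $w_j=w\,s_j$ and $s_j$ denotes the prefix of $\bar v$ of length $j$. For $1\le j\le k$ put $R_j=\overline{s_j}\,u\,s_j$; this is a palindrome, and since $v$ ends with the block $\overline{s_j}$ it is a suffix of $w_j$. I claim $R_j$ is unioccurrent in $w_j$. A length count shows that an occurrence of $R_j$ other than the suffix one would have to start strictly to the left of position $k-j$; then the central copy of $u$ in that occurrence lies entirely inside the prefix $w$ of $w_j$ and begins at a position of $w$ that is at least $1$ (the $j\ge 1$ letters of $\overline{s_j}$ precede it inside $R_j$) and at most $k-1$ — impossible, by the structural claim. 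Since $R_j$ is a palindromic suffix of $w_j$ occurring only at the end, it does not occur in $w_{j-1}$; hence $R_j\in\mathrm{Pal}(w_j)\setminus\mathrm{Pal}(w_{j-1})$ and $|\mathrm{Pal}(w_j)|\ge|\mathrm{Pal}(w_{j-1})|+1$. Combined with Proposition \ref{p0} and the inductive hypothesis that $w_{j-1}$ is rich, this forces $|\mathrm{Pal}(w_j)|=|w_j|+1$, so $w_j$ is rich. Running the induction from $j=0$ to $j=k$ gives that $w^{(++)}$ is rich.

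I expect the structural claim to be the main obstacle: one has to identify precisely the consequence of richness — here the complete-return property of Proposition \ref{p3} — that forbids an interior occurrence of $\mathrm{lpps}(w)$. The remaining difficulty is purely bookkeeping: in the unioccurrence step one must keep careful track of where the embedded copy of $u$ in $R_j$ sits relative to the boundary between the original prefix $w$ and the freshly appended letters of $\bar v$.
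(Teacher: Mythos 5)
Your proof is correct and follows essentially the same route as the paper, which simply cites the proof of Prop.~2.8 in \cite{gjwz} and notes that the argument rests on the fact that ${\rm lpps}(w)$ can occur only at the beginning and at the end of $w$, so that each appended letter creates a new palindrome. Your structural claim (proved via the complete-return characterization of Proposition \ref{p3}) is exactly that fact, and the letter-by-letter induction with the unioccurrent palindromic suffixes $R_j$ is the standard way to finish; the reduction of the non-palindrome case to Proposition \ref{p4} is a harmless reorganization.
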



\section{Extensions of rich words}

Let $w$ be a rich word. We say that $w$ can be \emph{extended richly} with a word $u$ if $wu$ is rich.
The word $wu$ is called a \emph{rich extension} of $w$ with the word $u$.
We also say that $w$ can be extended richly in $n$ ways if there exists $n$ distinct letters $a\in\textrm{Alph}(w)$ such that $wa$ is rich.
The word $w$ can be \emph{eventually extended richly} in $n$ ways if there exists a finite word $u$ such that
the word $wu$ is rich and can be extended richly in $n$ ways.

In \cite{gjwz}, it was proved that every rich word can be extended richly at least in one way:

\begin{proposition}\label{p6}(\cite{gjwz}, Prop. 2.8)
Suppose $w$ is a rich word. Then there exist letters $x,z\in{\rm Alph}(w)$ such that $wx$ and $zw$ are rich.
\end{proposition}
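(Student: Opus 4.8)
The plan is to deduce both extensions from a single construction: show that every nonempty rich word $w$ is a proper prefix of some rich word, and then read off $x$ as the first letter beyond $w$. Since every factor of a rich word is rich (Proposition \ref{p1}), this immediately gives that $wx$ is rich, and the left-extension letter $z$ will follow by applying the same argument to the reversal $\bar w$, which is again rich, and then reversing back.

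To realize $w$ as a proper prefix of a rich word I would use the \emph{proper} palindromic closure rather than the ordinary one. Writing $w = qs$ with $s = \textrm{lpps}(w)$ the longest proper palindromic suffix, the word $w^{(++)} = qs\bar q$ is rich by Proposition \ref{p5}. Because $s$ is a \emph{proper} suffix we have $q \neq \epsilon$, so $w^{(++)}$ is strictly longer than $w$, and $w = qs$ is visibly a prefix of it. The letter following the prefix $qs$ inside $w^{(++)} = q\,s\,\bar q$ is the first letter of $\bar q$, i.e.\ the last letter $x$ of $q$; hence $wx$ is a factor of the rich word $w^{(++)}$ and is therefore rich, with $x \in \textrm{Alph}(q) \subseteq \textrm{Alph}(w)$.

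For $z$, apply the above to $\bar w$ (rich, by the observation following Proposition \ref{p2}) to obtain a letter $y \in \textrm{Alph}(\bar w) = \textrm{Alph}(w)$ with $\bar w y$ rich. Then $\overline{\bar w y} = yw$ (a letter is its own reversal), and the reversal of a rich word is rich, so $z = y$ works.

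I expect the only real obstacle to be the choice of closure. The ordinary right palindromic closure $w^{(+)}$ fails to extend $w$ at all when $w$ is itself a palindrome (there $w^{(+)} = w$), so Proposition \ref{p4} by itself is not enough; the proper palindromic closure of Proposition \ref{p5} is designed precisely to force a genuine extension ($q \neq \epsilon$) regardless of whether $w$ is a palindrome. Beyond that I would just sanity-check the degenerate cases: $w$ a single letter, where $\textrm{lpps}(w) = \epsilon$ and the construction still yields $wx = aa$, and $w = \epsilon$, where the statement is vacuous.
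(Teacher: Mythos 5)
Your proof is correct and follows essentially the route the paper itself indicates: the paper does not prove this proposition (it is cited from \cite{gjwz}), but your derivation --- extend $w=qs$ to the rich word $w^{(++)}=qs\bar q$ via Proposition \ref{p5}, read off $x$ as the last letter of $q\neq\epsilon$, and obtain $z$ by running the same argument on $\bar w$ and reversing --- is exactly the argument underlying Prop.~2.8 of \cite{gjwz}. The one caveat worth noting is that Proposition \ref{p5} is itself extracted from the proof of that same Prop.~2.8, so what you have is a valid derivation within this paper's stated framework rather than an independent proof of the cited result.
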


The next theorem states that every (non-unary) rich word can be eventually extended richly at least in two ways.

\begin{theorem}\label{tt1}
Let $w$ be a non-unary rich word.
There exists a word $u$ such that $wu$ is rich, $|u|<2|w|$ and $wu$ can be extended richly at least in two ways.
\end{theorem}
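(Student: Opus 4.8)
The plan is to reach a two‑way‑extendable word in two stages: first drive $w$ to a palindrome by forced one‑letter extensions, then handle a non‑unary rich palindrome.

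\emph{Stage 1 (reduction to a palindrome).} If $w$ can already be extended richly in two ways, take $u=\epsilon$. Otherwise I would iterate the following step. Let $v$ be rich, non‑unary, not a palindrome and not two‑way‑extendable, and let $b$ be the letter of $v$ immediately preceding $\mathrm{lps}(v)$. Then $vb$ is rich: its longest palindromic suffix is $b\,\mathrm{lps}(v)\,b$ (nothing longer could be a palindromic suffix of $vb$, since the only palindromic suffix of $v$ longer than $\mathrm{lps}(v)$ would be $v$ itself), and $b\,\mathrm{lps}(v)\,b$ is unioccurrent in $vb$ because $\mathrm{lps}(v)$ is unioccurrent in $v$ by Proposition~\ref{p2}. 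Since $b\in\mathrm{Alph}(v)$ and $v$ is not two‑way‑extendable, $b$ is the unique letter of $\mathrm{Alph}(v)$ giving a rich extension, and $|\mathrm{lps}(vb)|=|\mathrm{lps}(v)|+2$. Replacing $v$ by $vb$, the alphabet is unchanged and the length of the longest palindromic suffix grows by $2$, so after at most $|w|-|\mathrm{lps}(w)|\le |w|-1$ steps either some intermediate word is already two‑way‑extendable (and we stop), or the longest palindromic suffix equals the whole word, i.e.\ we reach a palindrome. Tracking the letters added shows this palindrome is exactly the right palindromic closure $w^{(+)}$, which is rich by Proposition~\ref{p4}, non‑unary, and of length $2|w|-|\mathrm{lps}(w)|<2|w|$.

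\emph{Stage 2 (the palindrome).} It thus suffices to prove: a non‑unary rich palindrome $P$ can be extended richly in two ways. (Then $u$ carries $w$ to $w^{(+)}$, so $|u|<|w|<2|w|$ and both witnessing letters lie in $\mathrm{Alph}(w^{(+)})=\mathrm{Alph}(w)$; the remaining room up to $2|w|$ is slack, should a further short extension of $P$ be needed.) Let $q=\mathrm{lpps}(P)$ and let $f$ be the letter framing $q$, so $P$ starts with $qf$ and ends with $fq$ — here I use that $q$ occurs in $P$ only as a prefix and as a suffix (remark before Proposition~\ref{p5}). Exactly as in Stage~1, $Pf$ is rich, with new longest palindromic suffix $fqf$. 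For the second extension I would examine all framing letters: writing the nonempty proper palindromic suffixes of $P$ as $q=p_1\supsetneq p_2\supsetneq\cdots\supsetneq p_k$ (so $p_k$ is the last letter $c$) with $p_i$ framed by $b_i$ and $b_1=f$, the claim is that either (a) some $b_i\ne f$ satisfies $b_i\,p_i\,b_i\notin\mathrm F(P)$, in which case $Pb_i$ is rich and distinct from $Pf$; or (b) $b_1=\cdots=b_k=f$, and then descending the chain $p_k\subset p_{k-1}\subset\cdots$ forces $P$ to contain no occurrence of $cc$, so that $Pc$ has new longest palindromic suffix $cc$ (no nonempty palindromic suffix of $P$ is preceded by $c$), is rich, and $c\ne f$.

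\emph{Main obstacle.} The crux is Stage~2, and within it case~(a): one must show that, outside the degenerate situation of case~(b), there is a palindromic suffix $p_i$ framed by some $b_i\ne f$ whose "doubling" $b_i\,p_i\,b_i$ is not already a factor of $P$. Using the palindrome symmetry of $P$ and of $q$, every occurrence of such a $p_i$ coming from the prefix $q$ or the suffix $q$ of $P$ is flanked on one side by $f$ (or by nothing) and never by $b_i$ on both sides; the real difficulty is to exclude an occurrence of $p_i$ flanked by $b_i$ in the interior of $P$, for which I would invoke Proposition~\ref{p3} (complete returns to the palindrome $p_i$ must be palindromes) together with the fact that $q$, hence each $p_i$, is itself richly constrained. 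One must also verify the degenerate case~(b) — that "all framing letters equal $f$" genuinely forbids an internal $cc$ in a \emph{rich} palindrome. Stage~1 is routine, and the length bound is met with room to spare.
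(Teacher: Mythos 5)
There is a genuine and fatal gap: the statement to which you reduce everything in Stage~2 --- \emph{every non-unary rich palindrome can be extended richly in two ways} --- is false. Take $P=0100110110010$. This is a palindrome, and one checks (e.g.\ via Proposition~\ref{p3}: all complete returns to $0,1,00,11,010,101,0110,1001,\dots$ are palindromes) that it is rich, with the $14=|P|+1$ palindromic factors $\epsilon,0,1,00,11,010,101,0110,1001,11011,0110110,001101100,10011011001,P$. Here $\mathrm{lpps}(P)=010$ is framed by $f=0$, so $P0$ is rich (new palindrome $00100$). But the longest palindromic suffix of $P$ preceded by $1$ is the single letter $0$, so $\mathrm{lps}(P1)=101$, which already occurs in $P$ at positions $6$--$8$; hence $\mathrm{lps}(P1)$ is not unioccurrent and $P1$ is \emph{not} rich. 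In your notation this is exactly the situation you flagged as the crux: the chain of proper palindromic suffixes is $p_1=010$ (framed by $b_1=0=f$) and $p_2=0$ (framed by $b_2=1\neq f$), so you are not in case~(b), yet the only candidate for case~(a) has $b_2p_2b_2=101\in\mathrm F(P)$. So the dichotomy ``(a) or (b)'' fails, and no argument via complete returns can rescue it, because the configuration you hoped to exclude actually occurs. The parenthetical escape hatch (``a further short extension of $P$'') would require a completely different argument, since after Stage~1 you have no control left beyond ``$P$ is a rich palindrome.''

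The paper avoids this trap by never relying on the appended palindrome $b\,p\,b$ being new for a letter $b$ chosen from the suffix structure alone. Instead it first identifies a \emph{maximal} power $a^n$ of some letter $a$ in $w$ and uses proper palindromic closures to steer the extension so that $a^n$ becomes a suffix; then appending $a$ creates the palindrome $a^{n+1}$, which is automatically unioccurrent because it was not a factor at all, while appending the letter before the $\mathrm{lpps}$ creates $b\,\mathrm{lpps}\,b$, new because the $\mathrm{lpps}$ occurs only at the two ends. The maximality of $a^n$ also forces these two letters to differ. That global ``fresh palindrome'' certificate is precisely what your local analysis of framing letters cannot supply.
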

\begin{proof}
The idea of the proof is to take the biggest power $a^n$ of some letter $a$ in $w$ and
then extend the word richly such that this biggest power is the suffix of $wu$
(we stop extending when we see it as a suffix for the first time).
Now we can extend the word richly with the letter $a$ and with a letter $b$ that is
before the lpps$(wu)$, where $b$ is a different letter than $a$ because $a^n$ was the biggest power of $a$.
The extension with $a$ gives a new palindrome $a^{n+1}$ and the extension with $b$ gives a new palindrome $b$lpps$b$.

Now we only have to show that we actually can extend $w$ richly such that $a^n$ is a suffix.
Suppose $w=zv$, where $v$ is the longest proper palindromic suffix. Now $w^{(++)}=zv\bar{z}$.
If $a^n$ is a factor of $z$ then we can cut the word $w^{(++)}$ such that the $a^n$ inside $\bar{z}$ is a suffix.
If $a^n$ is a prefix, and hence a suffix, of $v$ then we are already done.
So we still need to consider the case where $a^n$ is as a factor only inside $v$.

For this case we take the proper palindromic closure of $w^{(++)}$.
The word $x=\textrm{lpps}(w^{(++)})$ cannot be longer than $v\bar{z}$ because otherwise $v$ would not be lpps$(w)$.
Now there are two subcases:
1) $v\bar{z}=v'x$, where $v'$ is a prefix of $v=v'v''$,
and
2) $\bar{z}=z'x$.

Suppose the case 1. Now the word $zv\bar{z}\bar{v'}$ is rich and if $a^n$ is a factor of $v'$
then we can cut the word $zv\bar{z}\bar{v'}$ inside $\bar{v'}$ such that $a^n$ is a suffix.
If $a^n$ is not a factor of $v'$ then it has to be a factor of $v''$, which is a contradiction.
This comes from the fact that $v'v''\bar{z}$ would be an overlap of two palindromes $v$ and $x$,
where $v''$ is the common part which contains $a^n$ but the other parts $v'$ and $\bar{z}$ would not contain $a^n$.

Suppose the case 2. Now the word $zvz'x\bar{z'}v$ is rich and because $v$ contains $a^n$ as a factor
we can cut the word $zvz'x\bar{z'}v$ inside the latter $v$ such that $a^n$ is a suffix.

The rich extension $wu$ was clearly constructed in every case such that $|u|<2|w|$.
\end{proof}

\begin{example}
Let us use the idea from the previous theorem into the following rich word
$w=11011010101010110011000111000011100$. The word can be extended richly only with letter $0$
and the biggest powers of both letters $0$ and $1$ are inside the lpps $00111000011100$.
We take the proper palindromic closure $w^{(++)}=w011001101010101011011=wz'x$,
where the new lpps is $x=11011$. Now we get $111$ to a suffix: $(w^{(++)})^{(++)}=wz'x\bar{z'}00111\cdots$
and we can extend the word richly with both $0$ and $1$.
\end{example}

\begin{remark}
The original word in the previous example had length $35$ and the new word
up to the point where we can extend it with two ways had length $77$.
We can increase the ratio $77/35=2.2$ ultimately close to $3$ by making the block $010101010$ longer.
So the bound from the previous theorem can be reached if we use the idea of the proof on how to extend the word richly.
Although, already the word $w0$ can be extended richly in two ways! This implies that there is much to improve.
\end{remark}

\begin{remark}\label{rr1}
We can construct words such that the number of consecutive compulsory rich extensions grows arbitrarily large.
For example for the word $01011011101111011111001$ there are four compulsory extensions
with letter $1$ before we can extend it with both $0$ and $1$.
From the above word we can easily see how to generalize this for an arbitrary $n$.
Notice that the length of the word is $\frac{(n+1)n}{2}+3$ for $n$ compulsory rich extensions, so the word grows rapidly.
There can also be consecutive compulsory rich extensions which are not the for the same letter.
For example the word $1010010011000110010$ has to be extended first with letter $0$ and then with letter $1$.
\end{remark}

The next proposition will be used in Proposition \ref{pp1} and Proposition \ref{pp2}.
It gives us a necessary condition if two rich words can appear in a same rich word.

\begin{proposition}\label{pj}
Two rich words $u$ and $v$ cannot be factors of a same rich word if there are
$u'\in{\rm F}(u)$ and $v'\in{\rm F}(v)$ such that
${\rm lps}(u')={\rm lps}(v'),\ {\rm lpp}(u')={\rm lpp}(v')$ and $u'\neq v'$.
\end{proposition}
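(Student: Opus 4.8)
The plan is to argue by contradiction: suppose $w$ is a rich word containing both $u$ and $v$ as factors, with the subfactors $u'\in{\rm F}(u)$ and $v'\in{\rm F}(v)$ as in the statement, so that ${\rm lps}(u')={\rm lps}(v')=:p$, ${\rm lpp}(u')={\rm lpp}(v')=:q$, but $u'\neq v'$. By Proposition \ref{p1}, $u'$ and $v'$ are themselves rich (as factors of the rich word $w$). The strategy is to show that a rich word is uniquely determined by the pair $({\rm lpp},{\rm lps})$ once we also know it is rich — i.e. that the ``richness'' constraint is rigid enough that there is at most one rich word with a given longest palindromic prefix and given longest palindromic suffix whose length is not too large. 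Since both $u'$ and $v'$ are rich and share $p$ and $q$, they must coincide, contradicting $u'\neq v'$.

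The key steps, in order: (1) First I would recall from Proposition \ref{p2} (and its prefix analogue) that in a rich word the lps is unioccurent, and likewise the lpp. So in $u'$ the palindrome $p$ occurs exactly once, as a suffix, and $q$ occurs exactly once, as a prefix; the same holds in $v'$. (2) Next I would show that $p$ and $q$ must overlap (or one contains the other) inside both $u'$ and $v'$: otherwise $u'=q\,r\,p$ and $v'=q\,s\,p$ with $q,p$ disjoint, and then one shows the internal part is forced — the factor strictly between the end of $q$ and the start of $p$ would have to be empty or would create, via complete-return considerations (Proposition \ref{p3}), a longer palindromic prefix or suffix, contradicting maximality of $q$ or $p$. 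More precisely, consider the shortest rich word having $q$ as prefix and $p$ as suffix; adding any letter to extend $q$'s occurrence forward, or $p$'s occurrence backward, must eventually produce overlap, and the first letter of overlap is forced by the palindrome structure. (3) In the overlapping case, write $u' = q_1 t \bar{q_2}$ where the reading of $u'$ from the left is governed by $q$ and from the right by $p$; since $q$ determines a prefix of length $|q|$ and $p$ determines a suffix of length $|p|$, and $|u'|\le |q|+|p|$ (the two palindromic ends cannot be disjoint with nonempty gap, by step 2, hence $|u'|<|q|+|p|$ forces full overlap in the relevant range, actually $|u'|\le |q|+|p|$ suffices), every position of $u'$ is covered by the forced prefix from $q$ or the forced suffix from $p$, so $u'$ is completely determined by $(p,q)$; the same determination applies to $v'$, giving $u'=v'$.

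The main obstacle I anticipate is step (2)–(3): controlling the length of $u'$ relative to $|p|+|q|$ and ruling out the ``disjoint with nonempty gap'' configuration. One has to use richness genuinely here — for a general (non-rich) word, knowing lpp and lps does not pin down the word. The argument will go through Proposition \ref{p3}: if $q$ and $p$ were disjoint in $u'$ with a nonempty middle block $m$, one considers the longest palindromic prefix of the word $q\,m$, shows it strictly exceeds $q$ (because a new palindrome must be created as we extend, by the richness/defect bookkeeping behind Proposition \ref{p6}), and concludes $q$ was not ${\rm lpp}(u')$ — contradiction. Symmetrically with $p$. So the ends must meet, and once they meet, the palindromic structure of $q$ read from the left and of $p$ read from the right agree on the overlap and cover everything, forcing $u'=v'$. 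I would also note the degenerate cases $p=u'$ or $q=u'$ (one of the maximal palindromes is the whole word), which are immediate since then $u'$ is that palindrome and likewise $v'$, so again $u'=v'$.
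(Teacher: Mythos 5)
The paper's own proof is a one-line citation: Theorem 6 of \cite{bdgz} states that a word is rich if and only if each of its factors is uniquely determined by its longest palindromic prefix and its longest palindromic suffix, and the proposition is an immediate consequence. Your plan is to reprove the relevant direction of that theorem, but the claim you reduce everything to --- ``there is at most one rich word with a given lpp and a given lps'' --- is false, and this sinks the argument. Take $u'=01$ and $v'=021$. Both are rich, both have ${\rm lpp}=0$ and ${\rm lps}=1$, and $u'\neq v'$. The uniqueness asserted in the cited theorem holds only among factors of one fixed rich word $w$; after invoking Proposition \ref{p1} to conclude that $u'$ and $v'$ are rich, you never use $w$ again, so whatever you derive must apply equally to the pair $(01,021)$ and therefore cannot yield $u'=v'$. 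A correct argument has to exploit how the two occurrences sit inside the common superword (e.g.\ via complete returns in $w$), not merely the richness of $u'$ and $v'$ separately.

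The same example refutes your step (2): in the rich word $021$ the lpp ($0$) and the lps ($1$) are disjoint with a nonempty gap, so richness does not force the two palindromic ends to meet or be adjacent. Your heuristic for ruling out the gap also conflates two different things: extending a rich word does create a new unioccurrent palindrome, but that palindrome is a \emph{suffix} of the extended word, not a longer palindromic \emph{prefix} (e.g.\ ${\rm lpp}(02)={\rm lpp}(0)=0$), so the maximality of $q$ is never contradicted. Step (3) has an independent problem even in the overlapping case: knowing that $q$ is a forced prefix, $p$ a forced suffix, and that together they cover all positions does not determine $|u'|$, hence does not determine $u'$. Since the statement is exactly Theorem 6 of \cite{bdgz}, the honest options are to cite it, as the paper does, or to reproduce that paper's proof, which is genuinely more delicate than this sketch.
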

\begin{proof}
From Theorem 6 in \cite{bdgz} we get immediately that if some word would contain such words $u$ and $v$ as factors,
and hence $u'$ and $v'$, then it would not be rich.
\end{proof}

The next proposition states that every rich word cannot be eventually extended richly in $3$ ways.
This means Theorem \ref{tt1} is not true for extensions in three or more ways.

\begin{proposition}\label{pp1}
Every ternary rich word cannot be eventually extended richly in $3$ ways.
\end{proposition}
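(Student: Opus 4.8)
The plan is to argue by contradiction and to reduce everything to the behaviour of longest palindromic suffixes under one‑letter extensions, closing with Proposition \ref{pj}. So suppose a ternary rich word can be eventually extended richly in three ways: there is a rich word $v$ (obtained from the given ternary word by a finite extension) together with three distinct letters $x,y,z\in\mathrm{Alph}(v)$ such that $vx$, $vy$ and $vz$ are all rich. The first step is to describe the three new palindromes. Since $v$ is rich, appending a letter to $v$ creates at most one new palindrome, and when $vx$ is rich this palindrome is $\mathrm{lps}(vx)$, which by Proposition \ref{p2} is unioccurrent in $vx$; since $x$ already occurs in $v$ we have $|\mathrm{lps}(vx)|\ge 2$, so $\mathrm{lps}(vx)=xR_xx$ for a (possibly empty) palindrome $R_x$, and one checks that $R_x$ is a palindromic suffix of $v$ while $xR_x$ is a suffix of $v$ that is not followed by $x$ there.

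The next step is to organise the three palindromes $R_x,R_y,R_z$. They are palindromic suffixes of $v$, hence pairwise comparable for the suffix order, and their lengths are pairwise distinct: if, say, $|R_x|=|R_y|$ then $R_x=R_y$, and $xR_x$, $yR_x$ would both be suffixes of $v$, forcing $x=y$. Relabel so that $|R_x|<|R_y|<|R_z|$, so that $R_x$ is a suffix of $R_y$ and $R_y$ a suffix of $R_z$. Using palindromicity together with the fact that $xR_x$, $yR_y$, $zR_z$ are all suffixes of $v$, one derives the nesting identities: $R_y$ begins with $R_xx$ and ends with $xR_x$, and $R_z$ begins with $R_yy$ and ends with $yR_y$. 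In particular each of $R_x,R_y,R_z$ is forced to reoccur inside the next one, so $v$ contains several occurrences of each $R_w$ whose relative positions are governed by Proposition \ref{p3} (every complete return to a palindrome in a rich word is a palindrome).

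The heart of the proof is to extract from this rigid picture two factors that cannot coexist in a rich word. I would exhibit a factor $u'$ (inside one of $vx,vy,vz$, or inside a common rich word containing two of them, whichever formulation is cleanest) and a factor $v'$ of $v$ with $u'\neq v'$ but $\mathrm{lpp}(u')=\mathrm{lpp}(v')$ and $\mathrm{lps}(u')=\mathrm{lps}(v')$; Proposition \ref{pj} then gives the contradiction. The natural candidates are built from the nested chain: on one side the complete return to $R_y$ forced around the terminal copy of $R_y$ inside $R_z$, on the other side the return created when $v$ is extended, matched so that both factors have longest palindromic prefix $R_x$ and the same longest palindromic suffix, while the nesting identities above make them distinct words.

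I expect this last matching step to be the real obstacle. The relations among $R_x,R_y,R_z$ are very tight and can be met by short, heavily overlapping configurations, so a crude pigeonhole on $(\mathrm{lpp},\mathrm{lps})$‑pairs does not suffice; one has to track carefully how the chain $R_x\subset R_y\subset R_z$ of palindromic suffixes must recur inside $v$ and argue that the presence of three distinct admissible extending letters makes two of these recurrences agree in their $(\mathrm{lpp},\mathrm{lps})$‑data while remaining distinct words. Everything else — the reduction to $v$, the identification of the new palindromes, the distinctness of the lengths $|R_x|,|R_y|,|R_z|$, and the nesting identities — is routine once this is set up, and no quantitative bound on the length of the extension is needed since the assertion is merely one of non‑existence.
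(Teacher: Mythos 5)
Your proposal sets out to prove the wrong statement, and the statement you are actually trying to prove is false. The proposition, despite its awkward phrasing, is an \emph{existential} claim: it asserts that not every ternary rich word can be eventually extended richly in $3$ ways, i.e.\ that there exists a ternary rich word admitting no such eventual extension. This reading is confirmed by Proposition \ref{pp2}, which generalizes it as ``there exists an $n$-ary rich word which cannot be eventually extended richly in $n$ ways,'' and by the remark immediately after, which exhibits $n$-ary rich words that \emph{can} be extended richly in $n$ ways for every $n$. You instead assume that some rich word $v$ has three rich one-letter extensions $vx,vy,vz$ and try to derive a contradiction; no contradiction is available, because such words exist. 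For instance $v=0102010$ works: $v0$, $v1$, $v2$ create the new unioccurrent palindromic suffixes $00$, $101$, $20102$ respectively, realizing exactly the nested chain $R_0=\epsilon$, $R_1=0$, $R_2=010$ of palindromic suffixes that your argument constructs — and nothing goes wrong. So the ``heart of the proof'' you defer (matching two distinct factors with equal $\mathrm{lpp}$ and $\mathrm{lps}$ to invoke Proposition \ref{pj}) cannot be completed, and you acknowledge yourself that this step is missing.

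The paper's actual proof is entirely different in character: it exhibits the explicit word $w=0020102202$, uses Proposition \ref{pj} and the complete-return characterization (Proposition \ref{p3}) to show that $12$, $21$, $001$, $0202$, $2200$ and $100$ can never occur in any rich extension of $w$, and then runs a short case analysis on the last two or three letters of a hypothetical $wu$ with $wu0$, $wu1$, $wu2$ all rich, reaching a forbidden factor in every case. If you want to salvage your work, the structural observations you make (the new palindrome $xR_xx$, the distinctness and nesting of $R_x,R_y,R_z$) are correct and could be useful for analyzing \emph{which} words admit three extensions, but they do not by themselves rule anything out; the proposition requires producing and analyzing a concrete counterexample word.
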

\begin{proof}
The word $w=0020102202$ is rich. Let us prove that it cannot be eventually extended richly in 3 ways.
First we give some forbidden factors that can never appear on any rich extension of $w$.
We use Proposition \ref{pj} for this.

The factors $12$, $21$, $001$ and $0202$ are forbidden immediately due to Proposition \ref{pj}
because $w$ has factors $102$, $201$, $00201$ and $020102202$, respectively.
Suppose some rich extension of $w$ would have factor $00$. Then we could take the first occurence of $00$
and get that the rich extension has factor $\bar{w}=2022010200$. This is because the complete return to $00$ has to be a palindrome.
Now we would have factors $22010200$ and $10200$, which means that the factors $2200$ and $100$ are also forbidden

Now suppose the contrary: there exists a rich extension $wu$ such that $wu0$, $wu1$ and $wu2$ are rich.
The last letter of $wu$ has to be $0$ because otherwise we would have factors $12$ or $21$.
Now we have three cases depending on which is the second last letter of $wu$.

1) Suppose $wu=x00$. This  would give a forbidden factor $001$ in $wu1=x001$.

2) Suppose $wu=x10$. This  would give a forbidden factor $100$ in $wu0=x100$.

3) Suppose $wu=x20$.
If the third last letter is $0$ then we would get a factor $0202$ in $wu2$.
If the third last letter is $1$ then we would get a factor $12$ already in $wu$.
If the third last letter is $2$ then we would get a factor $2200$ in $wu0$.
These are all forbidden factors.

In each case we get a contradiction and the proof is complete.
\end{proof}

The next proposition states the previous for every $n\geq3$.

\begin{proposition}\label{pp2}
For every $n\geq3$ there exists an $n$-ary rich word which cannot be eventually extended richly in $n$ ways.
\end{proposition}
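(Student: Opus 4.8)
The plan is to reduce the general case to the ternary word $v:=0020102202$ of Proposition~\ref{pp1}. For $n\geq 3$ I would take $w_n=v\,3\,4\cdots(n-1)$, that is, $v$ followed by one occurrence each of the $n-3$ new letters in increasing order (so $w_3=v$). First I would check that $w_n$ is a rich $n$-ary word: if $x$ is rich and $a\notin\mathrm{Alph}(x)$ then $xa$ is rich, because the longest palindromic suffix of $xa$ is the single letter $a$, which is unioccurrent, so exactly one new palindromic factor appears; applying this along the chain $v,\ v3,\ v34,\dots$ gives richness of $w_n$, and clearly $\mathrm{Alph}(w_n)=\{0,1,\dots,n-1\}$.

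Next, to show $w_n$ cannot be eventually extended richly in $n$ ways, I would argue by contradiction: suppose $u$ is such that $w_n u$ is rich and $w_n u\,a$ is rich for every $a\in\{0,1,\dots,n-1\}$. Since $v$ is a prefix of $w_n$, it is a factor of $w_n u$ and of each $w_n u\,a$, so the forbidden-factor analysis from the proof of Proposition~\ref{pp1} — which uses only that $v$ is a factor of a rich word, together with Propositions~\ref{pj} and~\ref{p3} — carries over verbatim: the factors $12,21,001,0202,100,2200$ occur in none of these words, and the ``two occurrences of $00$ force $\bar v$ as a factor'' step remains available. From $w_n u\,1$ and $w_n u\,2$ being rich, the last letter of $w_n u$ is neither $1$ nor $2$. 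I would then run the case analysis of Proposition~\ref{pp1} on the last three letters of $w_n u$; every branch in which these letters lie in $\{0,1,2\}$ closes with one of the forbidden factors exactly as there. For the remaining branches, in which a new letter $j\in\{3,\dots,n-1\}$ sits among the last letters (or is the last letter), I would use that $j$ occurs in $w_n$ only once — flanked by $j-1$ and $j+1$, with the final letter $2$ of $v$ just before the block of new letters — together with the facts that all prefixes of $w_n u$ are rich and that complete returns to $j$ are palindromes (Proposition~\ref{p3}); this forces either a non-palindromic complete return to $j$, or that a second occurrence of $j$ produces a length-$3$ factor conflicting, via Proposition~\ref{pj}, with a factor of $w_n$. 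A contradiction in every case then shows $w_n$ is not eventually extendable richly in $n$ ways.

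The step I expect to be the real obstacle is exactly the control of the new letters when the extension word $u$ itself re-introduces some $j\in\{3,\dots,n-1\}$ near its right end: then the unique, well-placed occurrence of $j$ in $w_n$ is no longer the one relevant to appending $j$, and the letters adjacent to the latest occurrence of $j$ become uncontrolled. I would try to close this by choosing the placement of the new letters in $w_n$ so that a second occurrence of $j$ is already impossible in any rich word containing $w_n$ — arranging that such an occurrence would force, through a palindromic complete return, a short factor that conflicts under Proposition~\ref{pj} with a factor built into $w_n$ — and, if the uniform construction resists this, I would fall back (in the spirit of Proposition~\ref{pp1}) on exhibiting for each $n$ an explicit $n$-ary rich word together with an explicit finite list of forbidden factors.
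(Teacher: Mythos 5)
Your construction of $w_n=v\,3\,4\cdots(n-1)$ with $v=0020102202$ is a genuine $n$-ary rich word (appending a fresh letter does preserve richness, as you argue), but the proof does not close, and the hole is exactly the one you flag at the end. The whole case analysis of Proposition~\ref{pp1} enumerates the possibilities for the last two or three letters of $wu$ \emph{within the alphabet} $\{0,1,2\}$; over $\{0,\dots,n-1\}$ there are additional branches in which some $j\in\{3,\dots,n-1\}$ occupies one of those positions, and for these you have no forbidden factors. If the last letter of $w_nu$ is $j\geq3$, the extensions $w_nu0$, $w_nu1$, $w_nu2$ create $j0$, $j1$, $j2$, and Proposition~\ref{pj} gives nothing: the unique occurrence of $j$ in $w_n$ sits in the monotone tail $2\,3\,4\cdots(n-1)$, so every factor of $w_n$ containing $j$ has an lpp/lps pair of increasing letters $\geq2$, which never matches the pairs coming from $j0$, $j1$, $j2$. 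Even the branch ``last letter is $0$'' no longer closes, since the third-to-last letter may now be some $j\geq3$ and the subcase list of Proposition~\ref{pp1} does not cover $j20$. Your two proposed repairs --- ruling out a second occurrence of $j$ via palindromic complete returns, or exhibiting a bespoke word and forbidden-factor list for each $n$ --- are only announced, not carried out (and the second is an infinite family of verifications, not a proof). So for every $n\geq4$ the argument is incomplete.

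The paper takes a different and simpler route for $n\geq4$: it does not build on the ternary example at all, but uses $w=123\cdots(n-1)n$, in which each letter occurs exactly once inside a single increasing block. The factor $i(i{+}1)(i{+}2)$ of $w$ has lpp $i$ and lps $i{+}2$, so by Proposition~\ref{pj} the two-letter word $i(i{+}2)$ (and, as the paper asserts, its reversal $(i{+}2)i$) can never occur in a rich word containing $w$. Now if $wu$ could be extended richly with all $n$ letters and its last letter were $j$, appending $j{+}2$ (available when $j\leq n-2$) or $j{-}2$ (available when $j\geq3$) would create one of these forbidden factors, and since $n\geq4$ at least one of the two options exists for every $j$. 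This kills every possible last letter uniformly, with no recursion into earlier positions of $wu$ --- which is precisely the control your tail $34\cdots(n-1)$ fails to provide. If you want to salvage your ``ternary core plus fresh letters'' idea, you would have to interleave each new letter $j$ with old ones so that Proposition~\ref{pj} actually forbids enough of the factors $j0,j1,j2,jk$; as written it does not.
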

\begin{proof}
The case $n=3$ follows from the previous proposition. For $n\geq4$ we take the word $w=123\cdots(n-1)n$.

Similar to the previous proof, factors $13,31,24,42,\ldots,(n-2)n$ and $n(n-2)$ can never appear on any rich extension of $w$.
Suppose to the contrary that $wu$ is such that we can extend it richly in $n$ ways, i.e. with all the letters $1,2,\ldots,n$.
Now, for every last letter of $wu$ we would always get one of the forbidden factors listed above.
\end{proof}

\begin{remark}
For every $n\geq1$ there also exists $n$-ary rich word which can be extended richly in $n$ ways.
The word $50102010301020104010201030102010$ is an example for $n=6$.
When you see how the above word is constructed you see that it can be generalized for every $n\geq1$.
\end{remark}

\begin{remark}
From the proof of the previous proposition we also see that if a word have factors like $0^k 1^l 2^m 3^n$,
where $k,l,m,n\geq1$, then we can never extend the word richly with all the letters $0,1,2$ and $3$.
\end{remark}

Using Theorem \ref{tt1} and its idea we can extend every rich word to be an infinite aperiodic rich word.

\begin{proposition}\label{pp3}
If $w$ is a non-unary rich word then it can be extended to be an infinite aperiodic rich word.
\end{proposition}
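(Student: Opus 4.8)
The plan is to iterate Theorem \ref{tt1} infinitely many times, each time choosing at every branch point the extension that keeps the word aperiodic. Concretely, I would start with $w_0 = w$ and build a sequence of rich words $w_0 \prec w_1 \prec w_2 \prec \cdots$ (each a prefix of the next) as follows. Given $w_i$, apply Theorem \ref{tt1} to obtain a rich extension $w_i u_i$ with $|u_i| < 2|w_i|$ that can be extended richly in at least two ways, say with distinct letters $a_i, b_i \in \textrm{Alph}(w_i u_i)$. Then set $w_{i+1}$ to be either $w_i u_i a_i$ or $w_i u_i b_i$ — whichever choice I argue does not force eventual periodicity. The limit $w_\infty$ of this increasing chain of prefixes is an infinite word all of whose factors are factors of some $w_i$, hence rich by Proposition \ref{p1}; so $w_\infty$ is an infinite rich word extending $w$. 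It remains only to ensure we can always steer away from ultimate periodicity.

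The key step is the aperiodicity argument at each branch. Here is the idea I would make precise: if $w_i u_i a_i$ is a prefix of an ultimately periodic word with period $p$, and also $w_i u_i b_i$ is a prefix of an ultimately periodic word with period $q$, then since $a_i \neq b_i$ these two ultimately periodic words differ at the position right after $w_i u_i$; but if both had already entered their periodic parts by that position, the shared prefix $w_i u_i$ (long compared to $p$ and $q$ once $i$ is large) would force $p = q$ and then force $a_i = b_i$, a contradiction. More carefully, at most one of the two one-letter extensions can be "the" continuation that (together with some tail) is ultimately periodic with a bounded period; so by always discarding a choice that would lock us into a previously-seen periodic pattern — or, more simply, by a diagonal argument in which at stage $i$ we make a choice that destroys the possibility of period $i$ — we produce $w_\infty$ which is not ultimately periodic. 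I expect this bookkeeping to be the main obstacle: one must phrase the "destroying period $i$" condition so that a single well-chosen letter at stage $i$ suffices, using that the two available letters genuinely disagree.

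An alternative, possibly cleaner route uses Theorem \ref{tt1} together with the richness of Sturmian words (mentioned in the introduction) or with Proposition \ref{p3}: repeatedly extend $w$ richly to reach a point where one can prepend/append to a known aperiodic rich word. But I would favour the direct iteration above, since it only invokes results already proved in this section. The length bound $|u_i| < 2|w_i|$ plays no essential role for $\omega$-extension — it merely guarantees each step terminates — so the whole construction is just "apply Theorem \ref{tt1}, pick the branch that avoids period $i$, repeat." The only real content is that at a genuine two-way branch the two continuations cannot both be eventually periodic with the same small period, which follows because agreeing on a long common prefix and on the eventual period would force the next letters to agree.
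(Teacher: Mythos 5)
Your skeleton --- iterate Theorem \ref{tt1} to get an increasing chain of rich words and pass to the limit, which is rich by Proposition \ref{p1} --- is exactly the paper's, and that part is fine. The gap is in the aperiodicity argument, and you have correctly located it yourself: the ``destroy period $i$ at stage $i$'' step does not work as formulated. For \emph{any} finite word $p$ and \emph{any} letter $c$, the word $pc$ is a prefix of an ultimately periodic word of period $i$ (take preperiod $pc$ and any period-$i$ tail), so a single letter choice at stage $i$ cannot exclude all ultimately periodic continuations of period $i$: the preperiod of the word you would need to kill may extend far beyond $|w_iu_i|$, so your Fine--Wilf-style hypothesis that ``both had already entered their periodic parts by that position'' can fail at every stage for precisely the word that matters. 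The diagonalization can be repaired --- enumerate the countably many ultimately periodic words $xv^\infty$ over ${\rm Alph}(w)$ by pairs $(x,v)$ rather than by periods, and at stage $i$ choose whichever of the two available letters differs from the $i$-th such word at position $|w_iu_i|+1$ --- but as written the proof is incomplete.

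The paper sidesteps all of this bookkeeping by exploiting the \emph{structure} of the extension in Theorem \ref{tt1} rather than treating it as a black-box branch point. There, one extends so that the maximal power $a^n$ of a fixed letter $a$ becomes a suffix and then appends $a$, creating the new palindrome $a^{n+1}$; one then appends the (distinct) letter preceding that block and repeats. The limit word therefore contains $a^k$ for every $k$ while containing a letter other than $a$ infinitely often, and such a word is aperiodic for free: an ultimately periodic word with period $p$ containing arbitrarily long blocks $a^k$ would have to end in $a^\infty$. No choices, enumeration, or Fine--Wilf argument is needed. If you want to keep your black-box version, supply the corrected enumeration above; otherwise, opening up the proof of Theorem \ref{tt1} gives the shorter route.
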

\begin{proof}
We can construct such word for every $w$ by repeating the procedure of Theorem \ref{tt1} infinitely many times.
We first choose the biggest power $a^n$ of some letter $a$ in $w$. Then we extend the word such that
this biggest power is a suffix. Then we extend the word with the letter $a$ and we get a new palindrome $a^{n+1}$.
Then we extend the word with the letter that is before the factor $a^{n+1}$. When we repeat this procedure
we get longer and longer powers of some letter. This kind of word is clearly aperiodic and rich.
\end{proof}

Using Proposition \ref{p4} we can also extend every rich word to be an infinite periodic rich word.

\begin{proposition}\label{pp4}
If $w$ is rich then it can be extended to be an infinite periodic rich word. 
\end{proposition}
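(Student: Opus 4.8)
The plan is to use $w^{(+)}$ (or a power of it) as the period. If $w$ is unary, $w=a^{k}$, then the periodic word $a^{\infty}$ is rich and extends $w$, so assume $w$ is non-unary and put $q=w^{(+)}$. By Proposition \ref{p4} the word $q$ is rich; it is a palindrome and $w$ is a prefix of $q$, hence of the periodic word $q^{\infty}$. Thus it suffices to show that $q^{\infty}$ is rich, and since every finite factor of $q^{\infty}$ lies inside some power $q^{N}$, by Proposition \ref{p1} it is enough to prove that $q^{N}$ is rich for every $N\ge 1$.

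First I would reduce to a primitive period. Write $q=t^{m}$ with $t$ primitive. Comparing $q=t^{m}$ with $\bar q=\bar t^{\,m}$ shows that $t$ is a (non-unary) palindrome, $t$ is rich by Proposition \ref{p1} as a factor of $q$, and $q^{\infty}=t^{\infty}$; so it is enough to show that $t^{N}$ is rich for all $N$, where $t$ is a primitive rich palindrome of length $n$. The engine is the proper palindromic closure together with Proposition \ref{p5}. The key claim is: for $N\ge 2$ the longest proper palindromic suffix of $t^{N}$ is $t^{N-1}$. Granting this, $t^{N}=t\cdot t^{N-1}$ with $t^{N-1}$ the longest proper palindromic suffix, so $(t^{N})^{(++)}=t\cdot t^{N-1}\cdot\bar t=t^{N+1}$, and Proposition \ref{p5} upgrades ``$t^{N}$ rich'' to ``$t^{N+1}$ rich''. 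To prove the claim, a palindromic suffix of $t^{N}$ longer than $t^{N-1}$ must have the form $\alpha t^{N-1}$ with $\alpha$ a non-empty proper suffix of $t$; equating it with its reversal $t^{N-1}\bar\alpha$ and comparing the first $|\alpha|$ letters (here $\bar\alpha$ is a prefix of $t$ because $t$ is a palindrome) forces $\alpha=\bar\alpha$, whence $\alpha t^{N-1}=t^{N-1}\alpha$. Commuting words are powers of a common word, and by uniqueness of primitive roots that word must be the primitive root $t$ of $t^{N-1}$; then $n=|t|$ divides $|\alpha|$, contradicting $0<|\alpha|<n$.

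It remains to start the induction. The case $N=1$ is that $t$ is rich. For $N=2$: if $m\ge 2$ then $t^{2}$ is a factor of $q=t^{m}$ and is rich by Proposition \ref{p1}, so we are done. If $m=1$ — i.e.\ $w^{(+)}$ is already primitive — one still has to show that $t^{2}$ is rich for a primitive rich palindrome $t$, and \emph{this is the step with real content}. I would argue via Proposition \ref{p3}: a complete return to a palindromic factor that lies inside one of the two copies of $t$ in $t\cdot t$ is a palindrome because $t$ is rich; a complete return that straddles the join is either all of $t^{2}$ (a palindrome) or is constrained by the periodicity coming from $t^{2}$ being a prefix of $t^{\infty}$, and a short argument using that periodicity together with the primitivity of $t$ forces it to be a palindrome — otherwise the periodicity would plant a third occurrence of the factor inside the complete return, contradicting its definition.

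Making this last argument fully precise — equivalently, showing that the iterated proper palindromic closure of $q$ stabilizes into a periodic word of the form $v^{\infty}$ — is the main obstacle; once it is settled, everything else is bookkeeping with Propositions \ref{p1}, \ref{p4} and \ref{p5}, and $q^{\infty}$ (respectively $v^{\infty}$) is the desired infinite periodic rich word extending $w$.
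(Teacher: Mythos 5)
There is a genuine gap, and you have located it yourself: the base case ``$t^{2}$ is rich for a primitive rich palindrome $t$'' (equivalently, $w^{(+)}w^{(+)}$ is rich when $w$ is rich) is never proved. Everything in your argument funnels into this one statement when $m=1$, which is the generic case, and the statement is essentially as strong as the proposition itself: none of Propositions \ref{p1}, \ref{p4}, \ref{p5} applies to it, because the ordinary palindromic closure of a palindrome is the palindrome itself, so $t^{2}$ is not obtained from $t$ by any richness-preserving operation you have quoted. Your sketch via Proposition \ref{p3} (``a complete return straddling the join is forced to be a palindrome by periodicity and primitivity'') is a plausible-sounding heuristic, not an argument; in particular it is not clear why a non-palindromic complete return straddling the centre would ``plant a third occurrence'' of the returned factor. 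So the proof is incomplete at its only load-bearing point. The surrounding machinery is fine: the reduction to a primitive palindromic period, the computation that ${\rm lpps}(t^{N})=t^{N-1}$ for $N\ge 2$ (using $t=\bar t$ and uniqueness of primitive roots), and the resulting step $(t^{N})^{(++)}=t^{N+1}$ with Proposition \ref{p5} are all correct.

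The paper's proof shows how to avoid needing the base case at all: instead of forming $(w^{(+)})^{2}$ and hoping it is rich, it iterates the \emph{proper} palindromic closure starting from $w^{(++)}=uv$ (with $v={\rm lpps}(w^{(++)})$) and proves by induction that $w^{(++)^{n}}=u^{n}v$, the induction step being exactly your lpps computation: ${\rm lpps}(u^{k}v)=u^{k-1}v$, hence $(u^{k}v)^{(++)}=u^{k+1}v$. Since every word in the chain is obtained from the previous one by a proper palindromic closure, Proposition \ref{p5} certifies richness at every stage with no extra input, and the limit $u^{\infty}$ is the desired periodic rich extension. In other words, your induction engine is the right one, but you must start it at $w^{(++)}$ (where Proposition \ref{p5} hands you richness for free at each step) rather than at $(w^{(+)})^{2}$ (where you must establish richness of a square of a palindrome from scratch). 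If you rewrite your argument with $u$ and $v$ in place of $t$ and $t^{N-1}$, the gap disappears and you recover the paper's proof.
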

\begin{proof}
From Proposition \ref{p5} we get that $w^{(++)}=uv$ is a rich palindrome, where $v$ is the lpps of $w^{(++)}$.
We prove by induction that $w^{(++)^n}=u^n v$, where $w^{(++)^n}$ means taking the proper palindromic closure
$n$ times in a row. It holds for $n=1$. Suppose it holds for $n=k$.

Now we have to prove that $w^{(++)^{k+1}}=u^{k+1} v$. When we use the assumption that the claim was true for $n=k$,
we get $w^{(++)^{k+1}}=(u^k v)^{(++)}=(v (\bar{u})^k)^{(++)}=(\bar{u}v(\bar{u})^{k-1})^{(++)}$.
Because $v(\bar{u})^{k-1}=u^{k-1}v$ is a palindrome, we get that ${\rm lpps}(\bar{u}v(\bar{u})^{k-1})=v(\bar{u})^{k-1}$.
Otherwise $v(\bar{u})^{k-1}$ would occur somewhere else than in the end or in the beginning of the word $\bar{u}v(\bar{u})^{k-1}$
and hence $v$ would not be the lpps of $w^{(++)}$.
This all means that $w^{(++)^{k+1}}=(\bar{u}v(\bar{u})^{k-1})^{(++)}=\bar{u}v(\bar{u})^{k-1}\bar{u}=u^{k+1}v$.

Repeating the procedure infinitely many times we get an infinite periodic word $u^\infty$ which is rich and has $w$ as a prefix.
\end{proof}

Finite rich words could always be extended richly with some letter by Proposition \ref{p6}.
The next proposition tells that the same holds for infinite rich words also.

\begin{proposition}
If $w$ is a right infinite rich word then there always exists a letter $a\in{\rm Alph}(w)$ such that $aw$ is rich.
\end{proposition}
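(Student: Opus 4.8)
The plan is to deduce the infinite statement from the finite one, Proposition \ref{p6}, by a pigeonhole (compactness) argument. Write $w=w_1w_2w_3\cdots$ and let $u_n=w_1\cdots w_n$ denote its prefix of length $n$. The first step is to observe what actually has to be checked: any finite factor $x$ of $aw$ is either a factor of $w$ or a prefix of $aw$. Indeed, in a factorization $aw=pxq$ with $x$ finite, either $p=\epsilon$, so $x$ is a prefix of $aw$, or $p$ begins with $a$, in which case $x$ is a factor of $w$. Since $w$ is rich, all of its factors are rich (Proposition \ref{p1}), so by the definition of an infinite rich word it suffices to show that every prefix $au_k$ of $aw$ is rich.

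Next, for each $n\ge 1$ I would apply Proposition \ref{p6} to the finite rich word $u_n$, obtaining a letter $z_n\in\mathrm{Alph}(u_n)\subseteq\mathrm{Alph}(w)$ such that $z_nu_n$ is rich. Since $\mathrm{Alph}(w)$ is finite, by the pigeonhole principle there is a single letter $a\in\mathrm{Alph}(w)$ with $z_n=a$ for infinitely many indices $n$.

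Finally I would verify that this $a$ works: given any prefix $au_k$ of $aw$, choose $m\ge k$ with $z_m=a$; then $au_m$ is rich, and $au_k$ is a prefix, hence a factor, of $au_m$, so $au_k$ is rich by Proposition \ref{p1}. Combined with the first step, this shows every finite factor of $aw$ is rich, i.e. $aw$ is rich, and $a\in\mathrm{Alph}(w)$ by construction.

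I do not expect a genuine obstacle; the only points needing a little care are the bookkeeping that all letters furnished by Proposition \ref{p6} lie in the fixed finite set $\mathrm{Alph}(w)$ (so that pigeonhole applies and the chosen $a$ really is a letter of $w$), and the reduction of richness of the infinite word $aw$ to richness of all its finite prefixes together with the already-known richness of $w$. This is essentially the standard passage from the finite to the infinite case.
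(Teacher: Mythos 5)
Your proof is correct. Both your argument and the paper's rest on the same key ingredient, Proposition \ref{p6} applied to finite prefixes, but the passage from finite to infinite is organized differently. You argue directly: left-extend each prefix $u_n$ by some admissible letter $z_n$, use the pigeonhole principle on the finite set ${\rm Alph}(w)$ to find one letter $a$ that works for infinitely many (hence, by Proposition \ref{p1}, all) prefixes, and then observe that every factor of $aw$ is either a factor of $w$ or a prefix $au_k$. The paper instead argues by contradiction with an extremal choice: assuming $aw$ is non-rich for every $a\in{\rm Alph}(w)$, it picks for each $a$ a (minimal) non-rich prefix $u_a$ of $aw$, takes the longest of these, $u_b$, and concludes that the rich prefix $b^{-1}u_b$ of $w$ admits no rich left-extension by any letter, contradicting Proposition \ref{p6}. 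The two are logically close — both are finiteness-of-alphabet arguments — but yours is the standard compactness/pigeonhole route and is, if anything, easier to make fully precise (the paper's phrase ``longest such word'' only makes sense once one fixes each $u_a$ to be the \emph{shortest} non-rich prefix of $aw$, since non-richness is inherited by all longer prefixes). Your version also makes explicit the reduction of richness of $aw$ to richness of its prefixes together with richness of $w$, which the paper leaves implicit. No gaps.
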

\begin{proof}
We suppose to the contrary that $aw$ is not rich for every $a\in{\rm Alph}(w)$.
From the definition of a rich infinite word we get that for every $a\in{\rm Alph}(w)$
there exists a word $u_a$ such that $u_a$ is non-rich prefix of $aw$.
Suppose that $u_b$ is longest such word.
Now we would get that $b^{-1}u_b$ is a rich word and cannot be extended richly to the left with any letter.
This is a contradiction because of Proposition \ref{p6}.
\end{proof}

The next proposition tells that taking the palindromic closure of the reverse of a rich word,
i.e. the left palindromic closure, preserves the number of rich extensions that the original word had.

\begin{proposition}
If $w$ is rich and can be extended richly in $n$ ways, then so can $\bar{w}^{(+)}$.
\end{proposition}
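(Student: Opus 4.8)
The plan is to pass to reversals, reduce the statement to a one‑letter claim, and then recognise the relevant word as a prefix of a rich word produced by a (proper) palindromic closure.

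First I would set up the reduction. Put $u:=\bar w$; reversals of rich words are rich, so $u$ is rich, and $\bar w^{(+)}=u^{(+)}$ is rich by Proposition~\ref{p4}. Since $u^{(+)}$ is a palindrome having $u=\bar w$ as a prefix, it has $w$ as a suffix, and for every letter $a$ one has $\overline{\bar w^{(+)}a}=a\,u^{(+)}$ and $\overline{wa}=au$. As richness is reversal‑invariant, ``$w$ can be extended richly to the right by $a$'' is equivalent to ``$au$ is rich'', and ``$\bar w^{(+)}$ can be extended richly to the right by $a$'' is equivalent to ``$a\,u^{(+)}$ is rich''. Palindromic closure introduces no new letters, so $\mathrm{Alph}(u^{(+)})=\mathrm{Alph}(w)$; and since $w$ is a suffix of the rich word $\bar w^{(+)}$, Proposition~\ref{p1} shows that every letter extending $\bar w^{(+)}$ already extends $w$. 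Hence it suffices to prove the one‑letter claim: \emph{if $u$ is rich and $au$ is rich, then $a\,u^{(+)}$ is rich}; the letters extending $w$ and those extending $\bar w^{(+)}$ will then coincide.

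For the one‑letter claim I would argue as follows. If $u$ is a palindrome then $u^{(+)}=u$ and there is nothing to do, so assume $u$ is not a palindrome; put $\ell=\mathrm{lps}(u)$ and $u=g\ell$, so $u^{(+)}=g\ell\bar g$ and $a\,u^{(+)}=ag\ell\bar g$. Now split on $\mathrm{lps}(au)$. If $\mathrm{lps}(au)=\ell$, then $au=(ag)\ell$ exhibits $\ell$ as the longest palindromic suffix of $au$, so $(au)^{(+)}=ag\,\ell\,\overline{ag}=(a\,u^{(+)})\bar a$; by Proposition~\ref{p4} the word $(au)^{(+)}$ is rich, hence so is its prefix $a\,u^{(+)}$ by Proposition~\ref{p1}. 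If $\mathrm{lps}(au)\neq\ell$, observe that $\ell$ is still a palindromic suffix of $au$ and that every palindromic suffix of $au$ of length at most $|u|$ is a palindromic suffix of $u$, hence no longer than $\ell$; therefore $\mathrm{lps}(au)$ must be $au$ itself, i.e.\ $au$ is a palindrome. Since $u$ is not a palindrome, the length‑$|u|$ suffix $u$ of $au$ is not a palindrome, so the longest \emph{proper} palindromic suffix of $au$ is again $\ell$, whence $(au)^{(++)}=ag\,\ell\,\overline{ag}=(a\,u^{(+)})\bar a$; by Proposition~\ref{p5} the word $(au)^{(++)}$ is rich, so $a\,u^{(+)}$ is rich by Proposition~\ref{p1}.

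The step I expect to need the most care is the second case: one has to be sure that when $au$ is a palindrome its longest proper palindromic suffix coincides with $\mathrm{lps}(u)$, which uses that deleting the first letter of $au$ returns exactly $u$, together with the elementary fact that $au$ and $u$ cannot both be palindromes unless $u$ is unary (for then $u$ commutes with the letter $a$). Everything else reduces to the closure identities $w^{(+)}=vp\bar v$ and $w^{(++)}=vp\bar v$ (with $p$ the longest, resp.\ longest proper, palindromic suffix of $w$) and to the fact that a prefix of a rich word is rich.
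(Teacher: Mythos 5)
Your proof is correct, but it follows a genuinely different route from the paper's. The paper works directly on the word $\bar{w}^{(+)}=uw=uv\bar{u}$ (with $v=\mathrm{lpp}(w)$) and shows that the unioccurrent longest palindromic suffix $p$ of $wa$ remains unioccurrent in $uwa$, via a case analysis on whether and how a hypothetical second occurrence of $p$ would force a second occurrence of $v$ inside $w$; it also separately argues the non-rich direction (which, as you observe implicitly, is immediate from Proposition~\ref{p1} since $wb$ is a factor of $\bar{w}^{(+)}b$). You instead reverse everything, reduce to the one-letter claim ``$a\bar w$ rich $\Rightarrow a\bar w^{(+)}$ rich,'' and settle it by exhibiting $a\bar w^{(+)}$ as a prefix of $(a\bar w)^{(+)}$ or of $(a\bar w)^{(++)}$ according to whether $\mathrm{lps}(a\bar w)$ equals $\mathrm{lps}(\bar w)$ or $a\bar w$ is itself a palindrome; your observation that these are the only two possibilities (any palindromic suffix of $a\bar w$ of length at most $|\bar w|$ is a palindromic suffix of $\bar w$) is exactly what makes the dichotomy exhaustive, and then Propositions~\ref{p4}, \ref{p5} and \ref{p1} finish the job. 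Your argument is shorter and avoids the occurrence-counting entirely by leaning on the closure lemmas already recorded in the paper; the paper's argument is more self-contained at the level of lps unioccurrence. One small remark: the ``elementary fact'' you flag at the end (that $a\bar w$ and $\bar w$ cannot both be palindromes unless $\bar w$ is unary) is not actually needed, since you have already disposed of the case where $\bar w$ is a palindrome before the case split; the identification $\mathrm{lpps}(a\bar w)=\mathrm{lps}(\bar w)$ in the palindromic case follows from your suffix observation alone.
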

\begin{proof}
Let $w$ be a rich word and $\bar{w}^{(+)}=uw=\bar{w}\bar{u}=uv\bar{u}$,
where $v$ is the lpp of $w$ and $u$ is possibly an empty word.
The word $u$ can be empty only if $w$ is a palindrome. The claim is clearly true
for palindromes so we can suppose $u\neq\epsilon$.
We suppose that $wa$ is rich and $wb$ is non-rich, for letters $a,b\in{\rm Alph}(w)$.
Now we only need to prove that 1) $\bar{w}^{(+)}a$ is rich and 2) $\bar{w}^{(+)}b$ is non-rich.

1) Suppose that $p$ is the lps of $wa$, which means it is unioccurent in $wa$.
We will prove that $p$ is also unioccurent lps of $\bar{w}^{(+)}a=uwa=uv\bar{u}a$.
Clearly $p$ cannot occur inside $w$ nor $\bar{w}$, otherwise it would not be unioccurent in $wa$.
So if $p$ would not be unioccurent in $\bar{w}^{(+)}a$ then it would have to contain $v$.

Suppose that $p=xvy$, where $xy\neq \epsilon$.
If $p=wa=v\bar{u}a$, then another occurence of $p$ in $\bar{w}^{(+)}a=uwa=\bar{w}\bar{u}a$ would imply another occurence of $v$ in $w$ ($u\neq\epsilon$).
If $p\neq wa=v\bar{u}a$, then the occurence of $p$ in the end of $wa$ would again directly imply another occurence of $v$ in $w$ ($xy\neq \epsilon$). 
Suppose that $xy=\epsilon$, i.e. $p=v$. This would directly mean that $wa=v\bar{u}a=p\bar{u}a$ would have two occurences of $p$.
So in every case we get a contradiction.

2) If the lps of $\bar{w}^{(+)}b$ is shorter than $wb$ then it is not unioccurent, because $wb$ was non-rich. This means $\bar{w}^{(+)}b$ is not rich.
The lps of $\bar{w}^{(+)}b$ clearly cannot be $wb$, otherwise $wb$ would be a palindrome and hence rich.
If the lps of $\bar{w}^{(+)}b$ is strictly longer than $wb=v\bar{u}b$ then $v$ would occur at least twice in $w$, which is impossible.
\end{proof}

At the end of this section we prove that every factor of any Sturmian word can always be extended richly in two ways.
Let us first define Sturmian words. In the following we suppose that all words are binary.

A word $w$ is \emph{balanced} if for every two factors $x,y\in \textrm{F}(w)$ of the same length
and for every letter $a\in\textrm{Alph}(w)$ the number $||x|_a-|y|_a|$ is at most $1$.
If a word is not balanced then it is \emph{unbalanced}.
An infinite word is \emph{Sturmian} if it is balanced and aperiodic.
A finite word is \emph{Sturmian} if it is a factor of an infinite Sturmian word.

First we need to know that all Sturmian words actually are rich in the first place.
The next proposition is from \cite{lgz} (Proposition 2). It was proved for trapezoidal words,
but Sturmian words are a subset of those.

\begin{proposition}\label{pp5}
If $w$ is Sturmian then it is rich.
\end{proposition}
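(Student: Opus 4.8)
The plan is to reduce the richness of an arbitrary finite Sturmian word to the fact, already recorded above, that palindromic closure preserves richness (Proposition~\ref{p4}), by passing through the description of Sturmian words via iterated palindromic closure. Concretely, I would not argue directly with balance or complexity, but instead use that every finite Sturmian word sits inside a \emph{characteristic} (standard) Sturmian word, and that characteristic Sturmian words are produced from the empty word by repeatedly appending a letter and taking the right palindromic closure.

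First I would pin down where $w$ lives. By definition $w$ is a factor of some infinite Sturmian word $s$; writing $\alpha$ for the (irrational) slope of $s$, all Sturmian words of slope $\alpha$ have the same set of factors, namely $\mathrm{F}(c_\alpha)$, where $c_\alpha$ is the characteristic Sturmian word of slope $\alpha$ (see \cite{l2}). Hence $w\in\mathrm{F}(c_\alpha)$, so it suffices to exhibit a \emph{finite} rich word that contains $w$.

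Next I would invoke the classical construction of $c_\alpha$: for an appropriate directive sequence $x_1 x_2 x_3\cdots\in\{0,1\}^\omega$ determined by $\alpha$, the words $u_0=\epsilon$ and $u_{n+1}=(u_n x_{n+1})^{(+)}$ satisfy that $u_n$ is a prefix of $u_{n+1}$, each step strictly increases the length, and $u_n\to c_\alpha$. Since $w\in\mathrm{F}(c_\alpha)$ and the $u_n$ are prefixes of $c_\alpha$ of unbounded length, $w\in\mathrm{F}(u_n)$ for all sufficiently large $n$. It therefore suffices to show that each $u_n$ is rich, and this follows by induction: $u_0=\epsilon$ is rich, and each $u_{n+1}$ is obtained from $u_n$ by appending one letter and taking the right palindromic closure, so — exactly as in the proof of Proposition~\ref{pp4}, where palindromic closures are chained while staying within the rich words — Proposition~\ref{p4} keeps $u_{n+1}$ rich. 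Hence $w$ is rich by Proposition~\ref{p1}. (The same reasoning shows that $c_\alpha$ itself is an infinite rich word.)

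The genuine content here is imported rather than proved: the structure theory of standard Sturmian words, namely that the "balanced and aperiodic" definition coincides with the palindromic-closure description of $c_\alpha$ and that the slope determines the factor set. Once that is available, richness is immediate from Proposition~\ref{p4}, so I do not expect any real combinatorial obstacle along this route. If one insisted on a self-contained argument staying inside the excerpt, the main obstacle would instead be to verify the criterion of Proposition~\ref{p3} directly on an infinite Sturmian word $s$ — that every complete return to every palindromic factor of $s$ is a palindrome — which is true but noticeably more technical, since it needs the two-return-word property of Sturmian factors together with a balance argument controlling how the occurrences of a palindrome recur. (A third, purely citational route is to note that Sturmian words are episturmian and that episturmian words are rich, but this merely repackages the same palindromic-closure fact.)
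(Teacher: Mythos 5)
The paper does not actually prove this proposition: it is imported verbatim from \cite{lgz} (Proposition 2 there), where richness is established for the larger class of \emph{trapezoidal} words, of which Sturmian words are a special case. So your proof is necessarily a different route from the paper's; it is the standard ``episturmian'' argument, reducing everything to the iterated-palindromic-closure description of characteristic Sturmian words and then invoking Proposition~\ref{p1} and Proposition~\ref{p4}. That route is sound in outline, and you are right to flag that the real content (slope determines the factor set; $c_\alpha$ is the limit of $u_{n+1}=(u_n x_{n+1})^{(+)}$) is imported Sturmian structure theory rather than proved. Compared with the trapezoidal-word argument of \cite{lgz} (which counts factor complexity against palindromic complexity), your approach is less self-contained on the Sturmian side but generalizes for free to all episturmian words.

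There is, however, one genuine gap in the induction step. Proposition~\ref{p4} says that if $v$ is rich then $v^{(+)}$ is rich; to conclude that $u_{n+1}=(u_n x_{n+1})^{(+)}$ is rich you must first know that $u_n x_{n+1}$ is rich, and this is exactly the point you skip. It does not follow from Proposition~\ref{p4} alone, and the analogy you draw with the proof of Proposition~\ref{pp4} does not help, because there the chaining is done with the \emph{proper} palindromic closure $w^{(++)}$, which never appends an arbitrary letter. You cannot argue that $u_n x_{n+1}$ is rich because it is a prefix of $c_\alpha$ — that is what you are trying to prove. What you need is the separate lemma that appending \emph{any} letter to a rich \emph{palindrome} preserves richness (equivalently, that the longest palindromic suffix of $pa$ is unioccurrent when $p$ is a rich palindrome). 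This is true and standard — it is essentially contained in the analysis behind Proposition 2.6 of \cite{gjwz} — but it is not among the propositions quoted in this paper, so you must either cite it explicitly or prove it (e.g.\ via the complete-return criterion of Proposition~\ref{p3} applied to the palindromic suffixes of $pa$). With that lemma supplied, your induction closes and the rest of the argument is correct.
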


The next two propositions are from \cite{bs} (Proposition 2.1.17 and Proposition 2.1.3, respectively).
The first one tells that finite Sturmian words and finite balanced words are the same and the second one
gives us a property for unbalanced words.

\begin{proposition}\label{pp6}
A finite word is Sturmian if and only if it is balanced.
\end{proposition}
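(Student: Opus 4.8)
The plan is to prove the two implications separately, the forward one being a one-line inheritance argument. If $w$ is Sturmian then $w$ is a factor of some infinite Sturmian word $s$, which is balanced; any two equal-length factors $x,y$ of $w$ are then also factors of $s$, so $||x|_a-|y|_a|\le 1$ holds already in $s$ and a fortiori in $w$. Hence $w$ is balanced --- note that only the balance of $s$ is used here, not its aperiodicity.

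For the converse let $w$ be a finite balanced binary word; I must realize it as a factor of an infinite \emph{aperiodic} balanced word, and I would do this through slopes. Encode $w$ by its prefix-sum staircase $i\mapsto|w_1\cdots w_i|_1$, and for $\alpha\in[0,1]$ call $\alpha$ admissible if $w$ occurs as a length-$|w|$ window of the mechanical (Beatty-type) word of slope $\alpha$ for some intercept. The standard theory of balanced words and mechanical sequences --- in particular the right-extension lemma (every finite balanced binary word has a balanced one-letter extension), which in turn rests on the characterization that a binary word is unbalanced precisely when it contains $0p0$ and $1p1$ for some palindrome $p$, the companion statement the paper quotes next --- yields that $w$ is balanced if and only if there is an admissible slope, and that the set $I$ of admissible slopes is then an interval whose endpoints are rationals of denominator at most $|w|$. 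Since $I$ is in fact a nondegenerate interval it contains an irrational $\alpha_0$; the mechanical word of slope $\alpha_0$ with a suitable intercept is an infinite aperiodic balanced word --- a Sturmian word --- in which $w$ occurs, so $w$ is Sturmian.

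The expected main obstacle is the passage to an irrational slope: one must check that the interval $I$ attached to a balanced finite word is genuinely nondegenerate --- i.e.\ that $w$ is never rigidly forced into a single, necessarily rational, slope, the extremal case being a long power of a Christoffel word --- and that perturbing the slope to a nearby irrational reproduces $w$ verbatim, which is where the bound on the denominators of the endpoints of $I$ is used (a form of continuity of the length-$|w|$ prefix of a mechanical word in its slope). The combinatorial input that $w$ balanced implies $w$ is a mechanical window, although classical, is the other place where real work hides, since it is precisely where the palindromic obstruction to balance must be exploited.
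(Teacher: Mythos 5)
The paper itself offers no proof of this proposition: it is imported verbatim from Berstel and S\'e\'ebold (\cite{bs}, Prop.~2.1.17), so there is no in-paper argument to compare against. Your forward direction is complete and is the standard one-line inheritance argument. Your converse follows the same mechanical-word route as that reference, so the strategy is the right one; the problem is that, as written, it is a plan rather than a proof. The entire content of the proposition is the claim that every balanced finite binary word occurs as a window of a mechanical word, and you assert this as a consequence of ``the standard theory,'' naming the right-extension lemma and the palindromic characterization of unbalance as its ingredients without carrying out either step.

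There is also a genuine gap in the one logical chain you do sketch. The right-extension lemma (every finite balanced word has a balanced one-letter extension), combined with K\"onig's lemma, only produces an infinite balanced word containing $w$; such a word need not be Sturmian --- it can be periodic, or ultimately periodic of the ``skew'' type --- so it does not by itself yield an admissible slope in your sense, let alone an irrational one. Bridging that gap requires the Morse--Hedlund-type classification of infinite balanced words and, in the periodic/skew case, the separate argument that a length-$|w|$ factor survives perturbing the slope to a nearby irrational; this is exactly the nondegeneracy of the slope interval $I$ that you flag as ``the expected main obstacle'' but do not resolve. In short: correct statement of both directions, correct identification of where the difficulty lies, but the hard implication is cited rather than proved. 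If the goal is to match the paper's level of rigor (a citation to \cite{bs}), this suffices; if the goal is a self-contained proof, the two named lemmas and the classification step must actually be established.
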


\begin{proposition}\label{pp7}
A word $u$ is unbalanced if and only if there exists a palindrome $v$ such that $0v0$ and $1v1$ are factors of $u$.
\end{proposition}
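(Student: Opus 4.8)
The plan is to prove the two implications separately; essentially all of the work lies in the forward direction. The direction ``$\Leftarrow$'' uses only that $0v0$ and $1v1$ are both factors (palindromicity of $v$ is irrelevant here): these are two factors of the common length $|v|+2$, yet $|0v0|_1=|v|_1$ while $|1v1|_1=|v|_1+2$, so $u$ has equal-length factors whose numbers of $1$'s differ by $2$ and is therefore unbalanced.

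For ``$\Rightarrow$'' suppose $u$ is unbalanced. First I would record a continuity observation: moving a window of a fixed length $\ell$ one step along $u$ changes its number of $1$'s by at most $1$, and every length-$\ell$ factor of $u$ occurs as such a window, so the set $\{\,|z|_1:z\in{\rm F}(u),\ |z|=\ell\,\}$ is an interval of integers. In particular $u$ has two factors of equal length whose numbers of $1$'s differ by exactly $2$. Among all such pairs choose $x,y$ with $|x|_1=|y|_1+2$ of minimal common length $\ell$ (necessarily $\ell\ge2$). The first letters of $x$ and $y$ must differ, since otherwise deleting that letter from both gives a shorter pair still differing by $2$; likewise the last letters differ. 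Examining the four possibilities for the first and the last letter of $x$, in three of them deleting the first and the last letter of $x$ and of $y$ yields a strictly shorter pair whose numbers of $1$'s differ by at least $2$, again contradicting minimality. The only surviving possibility is $x=1v_11$ and $y=0v_20$ with $|v_1|=|v_2|=\ell-2$ and $|v_1|_1=|v_2|_1$.

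It remains to turn this into a single palindrome. One first shows $v_1=v_2$. By minimality of $\ell$, all factors of $u$ of length less than $\ell$ have pairwise $1$-count difference at most $1$, so by the continuity observation the length-$(\ell-1)$ factors of $u$ realize exactly the two values $|v_1|_1$ and $|v_1|_1+1$, with $1v_1$ and $v_11$ attaining the larger value and $0v_2$ and $v_20$ the smaller; similarly the length-$(\ell-2)$ factors form an interval of width at most $1$ containing $|v_1|_1$. Using these constraints, a disagreement between $v_1$ and $v_2$ can be localized to a strictly shorter unbalanced pair (in the extreme case, to the two length-two patterns $00$ and $11$), which is impossible. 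Writing $v$ for the common word we have $0v0,1v1\in{\rm F}(u)$; choosing $v$ shortest with this property, I would finally show that $v$ is a palindrome by a similar descent: if $i$ is least with $v_i\ne v_{|v|+1-i}$ and we write $v=\pi\,v_i\,\mu\,v_{|v|+1-i}\,\bar\pi$ with $\pi=v_1\cdots v_{i-1}$, then already when $\pi=\epsilon$ one sees that $0v0$ contains $00$ and $1v1$ contains $11$, so $\epsilon$ is a shorter witness, and the case $\pi\ne\epsilon$ reduces to a shorter witness in the same way.

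The step I expect to be the main obstacle is this last paragraph: upgrading ``$u$ contains $1v_11$ and $0v_20$'' to ``$u$ contains $0v0$ and $1v1$ for a single palindrome $v$''. Both sub-steps are minimal-length descents, and the delicate part is the bookkeeping that actually exhibits the shorter unbalanced pair, or the shorter palindromic witness; the sliding-window ``interval'' observation is what keeps these descents under control. (The statement is classical and is Proposition~2.1.3 of \cite{bs}, whose argument I would follow.)
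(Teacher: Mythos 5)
This proposition is not proved in the paper at all: it is imported verbatim as Proposition~2.1.3 of \cite{bs}, so there is no in-paper argument to compare against, and your proposal is essentially a reconstruction of the classical Coven--Hedlund/Lothaire proof. The parts you actually carry out are correct: the ``$\Leftarrow$'' direction, the sliding-window interval observation, the choice of a minimal-length pair $x,y$ with $|x|_1=|y|_1+2$, and the four-case elimination forcing $x=1v_11$, $y=0v_20$ with $|v_1|_1=|v_2|_1$.

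The two steps you defer are, however, exactly the crux, and as written neither is an argument yet. For $v_1=v_2$: the ``constraints'' you invoke (that length-$(\ell-1)$ factors take exactly two $1$-count values, etc.) do not by themselves localize a disagreement; the standard mechanism is the first position where $v_1$ and $v_2$ differ. Write $v_1=pas_1$, $v_2=pbs_2$ with $a\ne b$. If $a=1,b=0$ then $1p1$ and $0p0$ are prefixes of $x$ and $y$, giving an unbalanced pair of length $|p|+2<\ell$; if $a=0,b=1$ then the suffixes $s_11$ and $s_20$ have $1$-counts differing by $2$ and length $<\ell$. Either way minimality of $\ell$ is contradicted, so $v_1=v_2=:v$. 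For palindromicity, your descent in the case $\pi\ne\epsilon$ does not actually produce a shorter witness of the form $0z0,1z1$ for a \emph{single} $z$: writing $v=\pi c\mu d\bar\pi$ with $c\ne d$ at the first asymmetric position, what you obtain is the pair $\{0\pi0,\ 1\bar\pi1\}$ (or $\{0\bar\pi0,\ 1\pi1\}$), i.e.\ a shorter \emph{unbalanced pair} of length $i+1<\ell$. That contradicts the minimality of $\ell$, not the minimality of the witness, and once you phrase it this way the second minimization over witnesses is unnecessary. These are fixable misstatements rather than a wrong approach, but the proof is not complete until they are replaced by the first-disagreement arguments above.
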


The next proposition states that every finite Sturmian word,
and hence every finite balanced word, can be extended richly in two ways.

\begin{proposition}\label{pp8}
Every finite Sturmian word can always be extended richly in two ways.
\end{proposition}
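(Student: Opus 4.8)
The plan is to show that for a (binary) finite Sturmian word $w$ both $w0$ and $w1$ are rich; the unary case is trivial, so I assume $w$ is non-unary, and I use freely that $w$ is itself rich, by Propositions \ref{pp5} and \ref{pp6}. Fix a letter $d\in\{0,1\}$. If $wd$ happens to be balanced, then it is Sturmian by Proposition \ref{pp6} and hence rich by Proposition \ref{pp5}, so we are done for this $d$; thus the only real case is that $wd$ is unbalanced, which I assume from now on.

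By Proposition \ref{pp7} there is a palindrome $t$ with $0t0,1t1\in{\rm F}(wd)$. Deleting the last letter of $wd$ gives the balanced word $w$, so by Proposition \ref{pp7} the words $0t0$ and $1t1$ are not both factors of $w$; at least one of them is not a factor of $w$, and since it is still a factor of $wd$, any occurrence of it must use the last letter of $wd$, so it is a suffix of $wd$ and therefore ends in $d$ — this identifies it as $dtd$. Hence $dt$ is a suffix of $w$, so $t$ is a palindromic suffix of $w$ preceded in $w$ by $d$; moreover $\bar d t\bar d$, ending in $\bar d$, cannot be a suffix of $wd$, so it lies in ${\rm F}(w)$.

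Now suppose for contradiction that $wd$ is not rich. Because $w$ is rich, $wd$ is rich exactly when ${\rm lps}(wd)$ is unioccurrent in $wd$; hence ${\rm lps}(wd)\in{\rm F}(w)$. Inspecting which palindromes can be suffixes of $wd$, one gets ${\rm lps}(wd)=dsd$, where $s$ is the longest palindromic suffix of $w$ preceded by $d$; since $t$ is such a suffix, $t$ is a palindromic suffix of $s$. If $s=t$, then $dtd={\rm lps}(wd)\in{\rm F}(w)$ outright. If $t$ is a proper suffix of $s$, then, $s$ being a palindrome whose proper suffix $t$ is preceded inside $s$ by the very letter $d$ that precedes $t$ in $w$, the prefix of $s$ of length $|t|+1$ equals $td$; therefore $dtd$ occurs inside $dsd\in{\rm F}(w)$, so again $dtd\in{\rm F}(w)$. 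In both cases $w$ contains $dtd$ and $\bar d t\bar d$, i.e. it contains $0t0$ and $1t1$; by Proposition \ref{pp7} this makes $w$ unbalanced, contradicting Proposition \ref{pp6}. So $wd$ is rich for $d=0$ and $d=1$, and $w$ can be extended richly in two ways.

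The step I expect to be the main obstacle is the structural bookkeeping in the last paragraph: justifying ${\rm lps}(wd)=dsd$, and, in the case $t\ne s$, the palindrome argument forcing $s$ to begin with $td$ (so that restoring the framing letters recreates $dtd$ inside $dsd$), together with making the ``the new factor is a suffix'' reduction fully rigorous. Note that balancedness is used in two distinct places — first to force the unbalance-witness $t$ to touch the appended letter, and then to derive the final contradiction via Proposition \ref{pp7} — which is precisely why the hypothesis must be ``Sturmian'' rather than merely ``rich''.
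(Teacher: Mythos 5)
Your proof is correct and follows the paper's approach: you use Proposition \ref{pp7} to produce a witness palindrome $t$ and the balancedness of $w$ to force $dtd$ to be a suffix of $wd$ that is not a factor of $w$. Note, though, that your entire third paragraph is superfluous (it is not the ``main obstacle'' but a detour): once $dtd$ is a palindromic suffix of $wd$ lying outside ${\rm F}(w)$, the count $|{\rm Pal}(wd)|\geq|{\rm Pal}(w)|+1=|wd|+1$ together with Proposition \ref{p0} already shows $wd$ is rich, which is exactly where the paper's proof stops.
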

\begin{proof}
Suppose $u$ is a finite Sturmian word and $\textrm{Alph}(u)=\{0,1\}$.
From Proposition \ref{pp5} and Proposition \ref{pp6} we get that $u$ is rich and balanced.
We only have to prove that both $u1$ and $u0$ are rich.

Suppose to the contrary that $u1$ is not rich (the case $u0$ is identical).
Now $u1$ is unbalanced because otherwise it would be Sturmian and hence rich.
Proposition \ref{pp7} tells us that there is a palindrome $v$ such that $0v0$ and $1v1$ are factors of $u1$.
Because $u$ was balanced, the factor $1v1$ has to be a suffix of $u1$ and it cannot occur anywhere else in the word.
This means that $1v1$ is a new palindrome and $u1$ is rich, a contradiction.
\end{proof}


\section{The infinite defect}

Rich words were defined such that there were maximum number of palindromes in the word.
We can define other words with respect to how many palindromes they lack, i.e. the defect of the word.
This concept have been studied in various papers from different angles, for example in \cite{gjwz}, \cite{bhnr}, \cite{br}, \cite{bps1} and \cite{bps2}.
In this section we define a new concept, the infinite defect.

The \emph{defect} of a finite word $w$ is defined by $\text{D}(w)=|w|+1-|\textrm{Pal}(w)|$.
The \emph{defect} of an (right, left or two-way) infinite word $w$ is defined by
$\text{D}(w)=\sup\{\text{D}(u) |\ u\ \text{is a factor of}\ w\}$.
If the previous supremum does not exists then the defect is defined to be $\infty$.
Clearly, finite and infinite rich words are exactly those words with defect equal to $0$.

We can also study how few defects a finite word must have if it has to be extended to be an infinite word.
We define the \emph{infinite defect} of a finite word $w$ with
$\text{D}_\infty(w)=\min\{\text{D}(z)\ |\ z\ \text{is an infinite word which has factor}\ w\}$,
where we of course suppose ${\rm Alph}(z)\subseteq{\rm Alph}(w)$.

The next theorem guarantees that the $\min$-function above gives always a finite number.

\begin{theorem}\label{tttt1}
The infinite defect ${\rm D}_\infty(w)$ of a finite word $w$ is finite.
\end{theorem}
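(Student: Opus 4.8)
The plan is to show that every finite word $w$ embeds into an infinite word of bounded defect, with an explicit bound depending only on $|w|$. The most economical route is to use the periodic extension machinery already available: given $w$, I would first pass to any infinite word $z_0$ over $\mathrm{Alph}(w)$ containing $w$ as a factor (e.g. $w^\infty$, or concatenate $w$ with a fixed periodic tail — such a $z_0$ certainly exists, so the $\min$ is taken over a nonempty set and is at worst the defect of $z_0$). The real content is that this defect is \emph{finite}, i.e. that some infinite extension of $w$ has only finitely many ``missing'' palindromes. So the first step is to reduce the theorem to the following claim: there is a periodic infinite word $p^\infty$ containing $w$ whose defect is finite.

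Next I would establish that claim. For a periodic word $p^\infty$, the set of factors is finite modulo the obvious shifts, and a standard fact (see e.g. \cite{bhnr}) is that a periodic infinite word has finite defect if and only if it is not ``rich-blocked'' in a pathological way — more usefully, every periodic word $u^\infty$ has $\mathrm{D}(u^\infty) = \mathrm{D}(uu) $ eventually stabilising, because a long enough factor of $u^\infty$ of the form $u^k$ contains all palindromic factors that will ever appear (any palindrome in $u^\infty$ has length at most, say, $2|u|$, by a standard argument on palindromes in periodic words: a palindrome longer than the period forces the period to be a palindrome-conjugate, and in any case palindromes in $u^\infty$ are bounded in length by a function of $|u|$). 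Hence $\mathrm{D}(u^\infty)$ is the supremum of $\mathrm{D}$ over a finite collection of factors, so it is finite. Taking $u$ to be, say, $w$ itself, we get $w \preceq w^\infty$ and $\mathrm{D}(w^\infty) < \infty$, which gives $\mathrm{D}_\infty(w) \le \mathrm{D}(w^\infty) < \infty$.

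Alternatively — and this is cleaner and I would probably present it this way — I would invoke Proposition \ref{pp4} more carefully: it is stated for rich $w$, but its proof via iterated proper palindromic closure produces a periodic word $u^\infty$. For a general (possibly non-rich) $w$, one cannot apply \ref{pp4} directly, but one can first embed $w$ as a factor of \emph{some} word and then argue about periodic words as above. So the skeleton is: (i) note the $\min$ is over a nonempty set; (ii) show every periodic word has finite defect by bounding the length of its palindromic factors; (iii) conclude $\mathrm{D}_\infty(w) \le \mathrm{D}(w^\infty) < \infty$.

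The main obstacle is step (ii): proving that palindromic factors of a periodic word $u^\infty$ have bounded length. This is where one must be careful — a priori a periodic word could contain arbitrarily long palindromes only if $u$ (up to conjugacy) is itself reversible in a strong sense, and in fact if $u^\infty$ contains palindromes of every length then $u^\infty$ is itself a two-way-infinite palindrome-limit, which can still happen (e.g. $(01)^\infty$ contains arbitrarily long palindromes $1010\cdots01$). So the naive ``palindromes are bounded'' claim is false, and the argument must instead directly bound the defect: one shows that for a periodic word, $\mathrm{D}(u^k u^{-1}\cdots)$ does not grow once $k$ is large, because each sufficiently long prefix extension of $u^k$ inside $u^\infty$ contributes exactly one new palindrome per new letter after an initial segment — equivalently, the number of palindromic factors of $u^n$ grows linearly in $n$ with the same slope as $|u^n|$ for large $n$. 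This ``eventual linear growth of palindromic complexity for periodic words'' is the crux; I expect the author proves it by a complete-return argument (Proposition \ref{p3}) or by directly analysing longest palindromic suffixes of successive prefixes.
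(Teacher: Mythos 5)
There is a genuine gap, and in fact the specific route you propose would fail. Your plan reduces the theorem to the claim that the periodic word $w^\infty$ (for $u=w$ itself) has finite defect, and in the final paragraph you correctly observe that the naive ``palindromes are bounded'' argument is false, but you then replace it with the claim that for \emph{every} periodic word $u^\infty$ the palindromic complexity of $u^n$ eventually grows with slope $1$ per letter. That claim is also false: $u^\infty$ contains arbitrarily long palindromic factors only when $u$ is a conjugate of a product of two palindromes, and otherwise $\mathrm{Pal}(u^n)$ is eventually constant while $|u^n|$ grows, so $\mathrm{D}(u^\infty)=\infty$. A concrete counterexample is $w=001011$, which is not a conjugate of a product of two palindromes, so $\mathrm{D}(w^\infty)=\infty$ and the inequality $\mathrm{D}_\infty(w)\le\mathrm{D}(w^\infty)$ proves nothing. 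Moreover, even for a well-chosen $u$ you explicitly defer the crux (``I expect the author proves it by\ldots''), so the key lemma is not established.

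The missing idea is to first replace $w$ by its palindromic closure $u=w^{(+)}$ and only then take the periodic word $u^\infty$. Because $u$ is a palindrome, every prefix $u^k v$ of $u^\infty$ (with $v$ a non-empty prefix of $u$, $k\ge 2$) has the palindromic suffix $\bar v\,u^{k-1}v$, which is longer than half of $u^k v$; hence either it is unioccurrent, or its two occurrences overlap and the overlap yields a longer palindromic suffix that is unioccurrent. In either case the longest palindromic suffix of $u^k v$ is unioccurrent, so each letter appended beyond $u^2$ contributes exactly one new palindrome and $\mathrm{D}(u^n)=\mathrm{D}(u^2)$ for all $n\ge 2$. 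This gives $\mathrm{D}_\infty(w)\le\mathrm{D}(u^\infty)=\mathrm{D}(u^2)<\infty$. Your overall skeleton (embed $w$ in a periodic word of finite defect, argue via unioccurrent longest palindromic suffixes of successive prefixes) is the right one, but without passing to the palindromic closure the periodic word need not have finite defect, and without the ``longer than half, hence overlapping, hence unioccurrent lps'' argument the finiteness is not proved.
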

\begin{proof}
Let us first denote $u=w^{(+)}$. We will prove that $u^\infty$ has finite defect.
More precisely, we will prove ${\rm D}(u^\infty)={\rm D}_\infty(u^2)$.
We do it with induction. So the claim is that ${\rm D}_\infty(u^n)={\rm D}_\infty(u^2)$ holds for all $n\geq2$.
If $n=2$ then the claim is trivial. Suppose it holds for $n=k$.

If $v$ is any non-empty prefix of $u$, then clearly $\bar{v}u^{k-1}v$ is a palindromic suffix of $u^k v$.
The word $\bar{v}u^{k-1}v$ is longer than half of the word $u^k v$,
so if $\bar{v}u^{k-1}v$ is not unioccurent in $u^k v$ then it must overlap with itself.
We can take the longest such overlap and get that it is unioccurent lps of $u^k v$.

We got that for every non-empty prefix $v$ of $u$ the word $u^k v$ has an unioccurent lps.
The word $u^{k+1}$ has therefore the same defect as $u^k$ because we get a new palindrome in every step when we extend $u^k$ into $u^{k+1}$.
This completes the induction.

We now get our claim because clearly ${\rm D}_\infty(w) \leq {\rm D}_\infty(u^2)$, where ${\rm D}_\infty(u^2)$ is finite.
\end{proof}

\begin{remark}
The definition of the infinite defect to be useful we have to find a word $w$ for
which ${\rm D}(w)\neq {\rm D}_\infty(w)$. The word $w=110100110111011001011$ is such.
Clearly ${\rm D}(w)=2$ but for every $w0, w1, 0w$ and $1w$ the defect is equal to $3$.
This means that no matter how we extend the word we always create new defects.

The defect and the infinite defect of rich words are equal to 0.
The defect and the infinite defect of a finite word can be same for non-rich words also.
For example ${\rm D}(00101100)={\rm D}_\infty(00101100)=1$.
\end{remark}

\begin{remark}
We could also define right and left infinite defects of $w$ separately such that the right (left) infinite
defect would mean the lowest defect of a right (resp. left) infinite word that contains $w$.
This would also be reasonable since there are words for which they would be different.
For example the word $w=101100111010111011$ has defect equal to $1$. We can extend $w$
to the right with infinite word $1^\infty$ and get a word which has also defect equal to $1$.
But already the words $0w$ and $1w$ have defects equal to $2$. So to get an infinite word that
has the lowest defect we sometimes have to extend it to the left and sometimes to the right.
Sometimes it does not matter.

We could also study how a right or left infinite word can be extended into a two-way infinite word.
The \emph{infinite defect} of a right or left infinite word $w$ could be defined by
${\rm D}_\infty(w)=\inf\{{\rm D}(z) |\ z\ \text{is a two-way infinite word that contains}\ w \}$.
\end{remark}

If a word is rich then we know that the infinite defect is always zero.
But how to determine the infinite defect for other words? We know that it is always finite.
The problem might be algorithmically undecidable, but we can at least find some upper bounds.
Clearly the normal defect is always a lower bound, which sometimes is achieved.

\begin{proposition}\label{pppp1}
For a finite word $w$ we have inequalities ${\rm D}_\infty(w)\leq{\rm D}(w^{(+)}w^{(+)})$ and 
${\rm D}_\infty(w)\leq{\rm D}(\bar{w}^{(+)}\bar{w}^{(+)})$.
\end{proposition}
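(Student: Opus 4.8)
The plan is to reuse the key computation already carried out in the proof of Theorem \ref{tttt1}. Recall that there we set $u=w^{(+)}$ and showed by induction that for every $n\geq 2$ the word $u^n$ has the same defect as $u^2$, with the consequence that the periodic infinite word $u^\infty$ satisfies ${\rm D}(u^\infty)={\rm D}(u^2)$. Since $u^\infty$ is an infinite word having $w$ as a factor (as $w$ is a prefix of $u=w^{(+)}$) and with ${\rm Alph}(u^\infty)={\rm Alph}(w)$, the very definition of ${\rm D}_\infty$ as a minimum over such infinite words gives ${\rm D}_\infty(w)\leq{\rm D}(u^\infty)={\rm D}(u^2)={\rm D}(w^{(+)}w^{(+)})$. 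This is the first inequality.

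For the second inequality I would apply exactly the same argument but starting from the reversal. Set $u'=\bar{w}^{(+)}$, the right palindromic closure of $\bar{w}$. The induction in Theorem \ref{tttt1} only used that $u'$ is a palindrome (so that $\bar{v}(u')^{k-1}v$ is a genuine palindromic suffix of $(u')^k v$ for every nonempty prefix $v$ of $u'$) together with the overlap-of-a-long-palindrome-with-itself trick; none of that is specific to $u$ being built from $w$ rather than from $\bar w$. Hence the same reasoning yields ${\rm D}((u')^\infty)={\rm D}((u')^2)$. Now $\bar{w}$ is a prefix of $u'$, so $w=\overline{\bar w}$ is a suffix of $\bar{u'}=u'$, and therefore $w$ occurs in $(u')^\infty$ (indeed already in $(u')^2$), again with alphabet ${\rm Alph}(w)$. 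The definition of ${\rm D}_\infty$ then gives ${\rm D}_\infty(w)\leq{\rm D}((u')^\infty)={\rm D}((u')^2)={\rm D}(\bar{w}^{(+)}\bar{w}^{(+)})$.

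The only point needing a line of care is verifying that $w$ really is a factor of the relevant squared word, i.e. that using $\bar{w}^{(+)}$ instead of $w^{(+)}$ still traps $w$: this is handled by the observation above that $w$ is a suffix of $\bar{w}^{(+)}$ (equivalently, $\bar w$ a prefix of it, by the definition of right palindromic closure), so $w$ occurs inside $\bar{w}^{(+)}\bar{w}^{(+)}$, hence inside $(\bar{w}^{(+)})^\infty$. The main (and essentially only) obstacle is thus purely bookkeeping — making sure the reversal is threaded through consistently — since the substantive combinatorial content, namely that taking proper/ordinary palindromic closure repeatedly on a palindrome does not increase the defect, is already established in Theorem \ref{tttt1}. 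I would therefore write the proof as two short paragraphs, the first quoting the $n=2$ case of Theorem \ref{tttt1}'s computation directly, and the second noting that it applies verbatim with $w$ replaced by $\bar w$.
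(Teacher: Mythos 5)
Your proof is correct and follows exactly the route the paper takes: the paper's entire proof of this proposition is the single sentence that it ``comes directly from the proof of Theorem \ref{tttt1}'', i.e.\ precisely your observation that ${\rm D}((w^{(+)})^\infty)={\rm D}(w^{(+)}w^{(+)})$ together with $w$ being a factor of that periodic word, and likewise for $\bar{w}$. Your extra care about $w$ being a suffix of $\bar{w}^{(+)}$ is a detail the paper leaves implicit, but it is the same argument.
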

\begin{proof}
This comes directly from the proof of Theorem \ref{tttt1}.
\end{proof}

\begin{remark}
This method is sometimes optimal. For example the word $w=00101100$ has defect equal to $1$
and so does the word $w^{(+)}w^{(+)}=0010110011010000101100110100$.

The method is still not always optimal. For example the word $w=110010010110010$ has defect equal to $4$.
We get that already the words $w^{(+)}=1100100101100100\cdots$ and $\bar{w}^{(+)}=0100110100100110\cdots$
have defects equal to $5$. But we can do better: ${\rm D}(1^\infty w)=4$, which means that ${\rm D}_\infty(w)=4$.

Note that the words $w^{(+)}w^{(+)}$ and $\bar{w}^{(+)}\bar{w}^{(+)}$ can actually have different defects.
Such word is for example $w=0010110001010$.
\end{remark}

\begin{proposition}\label{pppp2}
If $w$ is a finite word then ${\rm D}_\infty(w)\leq |w|-n$, where $n$ is the length of the 
longest rich suffix or the longest rich prefix of $w$.
\end{proposition}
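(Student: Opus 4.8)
The plan is to reduce the bound to the constructions already used in Theorem \ref{tttt1} and Proposition \ref{pppp1}, applied not to all of $w$ but to its longest rich suffix (or prefix). Write $w=sv$ where $v$ is the longest rich suffix of $w$, so $|v|=n$ and $|s|=|w|-n$. By Proposition \ref{p4} the word $v^{(+)}$ is a rich palindrome, and by the argument inside the proof of Theorem \ref{tttt1} the infinite word $(v^{(+)})^\infty$ is rich, i.e.\ has defect $0$. First I would build the two-way (or right) infinite word $z=s\,(v^{(+)})^\infty$ — that is, we keep the prefix $s$ and then run off to the right with the periodic rich tail coming from $v$. This word contains $w=sv$ as a factor (since $v$ is a prefix of $v^{(+)}$, hence of $(v^{(+)})^\infty$), so it is a legitimate competitor in the $\min$ defining ${\rm D}_\infty(w)$, and its alphabet is contained in ${\rm Alph}(w)$.

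The key estimate is then ${\rm D}(z)\le|s|=|w|-n$. The intuition is that the periodic tail $(v^{(+)})^\infty$ contributes no defect on its own, and prepending the block $s$ of length $|s|$ can introduce at most $|s|$ new defects: each time we prepend one letter to a word, the number of missing palindromes (the defect) increases by at most $1$, because extending a word by a single letter either creates a new unioccurrent longest palindromic suffix/prefix (defect unchanged) or it does not (defect increases by exactly $1$), as follows from Proposition \ref{p0} and the counting underlying the notion of defect. So starting from the defect-$0$ word $(v^{(+)})^\infty$ and adding the $|s|$ letters of $s$ one at a time on the left, the defect of every finite factor stays $\le|s|$, whence ${\rm D}(z)\le|s|=|w|-n$ and therefore ${\rm D}_\infty(w)\le|w|-n$. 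The symmetric choice $w=vs$ with $v$ the longest rich prefix, using $z=(v^{(+)})^\infty$ reversed appropriately (or ${}^{(+)}v$) and the left version of the same argument, gives ${\rm D}_\infty(w)\le|w|-n$ for $n$ the length of the longest rich prefix; taking the better of the two bounds yields the statement as written.

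The main obstacle is making rigorous the claim that prepending a single letter raises the defect by at most $1$ uniformly over all factors of an infinite word — i.e.\ controlling ${\rm D}(z)$, which is a supremum over all factors of $z$, not just over the prefixes $s(v^{(+)})^k$. One clean way around this: every factor of $z=s(v^{(+)})^\infty$ is a factor either of $(v^{(+)})^\infty$ (defect $0$, by the proof of Theorem \ref{tttt1}) or of some prefix $s\,(v^{(+)})^k$; for the latter, one shows by induction on the length of the removed part of $s$ that ${\rm D}\big(t\,(v^{(+)})^k\big)\le|t|$ for every suffix $t$ of $s$, using that appending the single letter of $t$ on the left of a word whose defect is $\le j$ gives a word of defect $\le j+1$. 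That per-letter increment is exactly the content of the standard ``defect lemma'' for single-letter extensions, and it is essentially the same bookkeeping already invoked tacitly in the proof of Theorem \ref{tttt1} (``we get a new palindrome in every step''); so the proof really is, as one expects, a short corollary of the machinery set up earlier, and I would phrase it that way rather than reproving the defect lemma from scratch.
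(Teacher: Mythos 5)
Your overall strategy is exactly the paper's: write $w=sv$ with $v$ the longest rich suffix, extend $v$ to the right into an infinite rich word, and observe that prepending the $|s|=|w|-n$ letters of $s$ one at a time raises the defect by at most one per letter (handling the supremum over \emph{all} factors of the resulting infinite word, as you do in your last paragraph, is the right way to make the paper's ``we clearly get at most $|v|$ defects'' precise, and you supply more detail there than the paper does).

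There is, however, one unsupported step: you justify the richness of $(v^{(+)})^\infty$ by ``the argument inside the proof of Theorem~\ref{tttt1}.'' That theorem does not show $(v^{(+)})^\infty$ is rich; it shows ${\rm D}\bigl((v^{(+)})^\infty\bigr)={\rm D}\bigl((v^{(+)})^2\bigr)$, i.e.\ only that the defect is finite and stabilizes at the value for the square. Even when $v$ is rich (so $v^{(+)}$ is rich by Proposition~\ref{p4}), it is not automatic that $(v^{(+)})^2$ is rich --- this is precisely the ``strongly rich'' issue the paper touches on in Section~5, and nothing you cite rules out ${\rm D}\bigl((v^{(+)})^2\bigr)>0$, which would wreck the bound ${\rm D}(z)\le|s|$. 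The fix is trivial and is what the paper intends: extend the rich suffix $v$ to a right-infinite \emph{rich} word using Proposition~\ref{pp4} (whose proof produces a periodic rich word via iterated proper palindromic closures, each finite prefix being rich by Proposition~\ref{p5}) or Proposition~\ref{pp3}, rather than insisting on the specific word $(v^{(+)})^\infty$. With that substitution your argument is complete and matches the paper's.
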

\begin{proof}
We can take the longest rich suffix (prefix) $u$ of $w=vu$ and extend it to be a right (resp. left) infinite rich word.
When we add the left-over $v$ to the beginning (resp. to the end) we clearly get at most $|v|$ defects.
\end{proof}

\begin{remark}
The method described in the previous proposition is sometimes optimal.
For example, the word $w=00101100101$ has defect $3$
and so does the word $00101100101(0^\infty)$, where we have extended the longest rich suffix $01100101$.
We see that $|w|-n=11-8=3$.

Sometimes it is not optimal. The word $w=1101100111010011011001101101110011011$
has defect $16$. The longest rich suffix and prefix are $v=01110011011$ and $\bar{v}$, respectively.
If we want to extend those to be right and left infinite rich words, respectively, we have to extend
them both first with $0$. That creates a new defect. After that, both extensions $0$ and $1$ create new defects also.
So we have that ${\rm D}_\infty(w00)$, ${\rm D}_\infty(w01)$, ${\rm D}_\infty(00w)$, ${\rm D}_\infty(10w)\geq 18$.
But we can see that ${\rm D}_\infty(w)=17$. Consider the word $w1^\infty$: the first letter $1$ creates a new defect
(which we cannot avoid) but after that we always get a new palindrome $1^k$, where $k\geq4$.
\end{remark}

\begin{proposition}\label{pppp3}
If $w$ is a finite word then ${\rm D}_\infty(w)\leq{\rm D}(w)+n$, where $n$ is
the length of the biggest power of any letter in $w$. If $w$ ends or begins with $a^k$ then we can
choose $n=n_a - k$, where $a^{n_a}$ is the biggest power of letter $a$ in $w$.
\end{proposition}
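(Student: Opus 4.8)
The plan is to extend $w$ on one side by an infinite power of a single letter, mirroring the construction of Theorem~\ref{tt1} and the argument in the proof of Theorem~\ref{tttt1}, and to show that this creates only finitely many new defects, all of them within the first $n$ extension steps. Since ${\rm D}(\bar w)={\rm D}(w)$ and ${\rm D}_\infty(\bar w)={\rm D}_\infty(w)$ (reversal is a bijection on factors and fixes palindromes), it suffices to treat appending on the right; the ``begins with $a^k$'' case follows by reversal from the ``ends with $a^k$'' case.

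So I would fix a letter $a\in{\rm Alph}(w)$, let $a^{n_a}$ be the biggest power of $a$ in $w$, and write $w=w'a^k$ with $k\geq 0$ maximal, i.e. $w'$ does not end in $a$. The heart of the proof is the claim ${\rm D}(wa^{\infty})={\rm D}(wa^{\,n_a-k})$. For every $j$ with $k+j>n_a$ we have $wa^{j}=w'a^{k+j}$, and the block $a^{k+j}$ occurs in $wa^{j}$ exactly once: every run of $a$'s inside $w'$ has length at most $n_a<k+j$, so the only occurrence of $a^{k+j}$ is the suffix. A short argument then identifies this occurrence as ${\rm lps}(wa^{j})$ — a strictly longer palindromic suffix would have to begin with $a^{k+j}$ and hence force a second occurrence of $a^{k+j}$ starting inside $w'$, a contradiction with maximality of $n_a$. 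Being unioccurrent, this ${\rm lps}$ is not a factor of $wa^{j-1}$, so (by the standard fact behind Proposition~\ref{p0}, that a one-letter extension raises the defect by one exactly when its ${\rm lps}$ fails to be unioccurrent) we get ${\rm D}(wa^{j})={\rm D}(wa^{j-1})$ for all $j>n_a-k$. Since the defect of an infinite word is the supremum of the defects of its finite factors, and ${\rm D}(wa^{j})$ is nondecreasing in $j$ and constant for $j\geq n_a-k$, the claim follows.

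From the claim, ${\rm D}_\infty(w)\leq{\rm D}(wa^{\infty})={\rm D}(wa^{\,n_a-k})\leq{\rm D}(w)+(n_a-k)$, the last step because each of the $n_a-k$ single-letter extensions raises the defect by at most one. Choosing $a$ to be a letter whose maximal power $n_a$ is as large as possible (so $n_a=n$) and discarding the $-k$ term gives the first inequality ${\rm D}_\infty(w)\leq{\rm D}(w)+n$. If $w$ ends with $a^k$ we instead keep the sharper bound ${\rm D}_\infty(w)\leq{\rm D}(w)+(n_a-k)$ for that $a$, and if $w$ begins with $a^k$ the same bound follows from the reversal reduction noted above. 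The step I expect to require the most care is ruling out a palindromic suffix of $wa^{j}$ longer than $a^{k+j}$ — and, relatedly, the bookkeeping that ${\rm D}(wa^{\infty})$ really equals the eventual constant value ${\rm D}(wa^{\,n_a-k})$ and not something larger; both are elementary but use the maximality of $n_a$ (and of $k$) in an essential way. The remaining ingredients — reversal symmetry and the one-letter-extension defect increment — are routine.
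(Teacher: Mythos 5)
Your proof is correct and follows essentially the same route as the paper: append $a^\infty$ on one side, observe that once the terminal run of $a$'s exceeds $n_a$ each further letter yields a new unioccurrent palindromic suffix $a^{k+j}$, so at most $n_a-k$ defects are created, with the ``begins with'' case handled by reversal. You simply spell out the details (unioccurrence of the long $a$-run as the lps, and the supremum bookkeeping for the infinite word) that the paper's two-sentence proof leaves implicit.
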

\begin{proof}
Suppose $a^n$ is the biggest power of letter $a$ in $w$. If we now extend the word $w$ with $a^\infty$
then we create a new palindrome $a^m$, where $m > n$, in every step after we have extended $w$ with at least $a^n$.
So we create at most $n$ new defects. Clearly, if $w$ ends or begins with $a^k$ then we create at most $n-k$ new defects.
\end{proof}

\begin{remark}
This method is also sometimes optimal. For example if we look the previous remark where the defect was $16$.
Now the biggest power of letter $1$ is $1^3$ and the word $w$ ends with $1^2$. So we get that 
${\rm D}_\infty(w)\leq{\rm D}(w)+3-2=17$.

For the word $w=101001111000111101001$, which has defect $4$, this method is not optimal.
The biggest powers of letters in $w$ are $0^3$ and $1^4$. If we now extend the word with $0^\infty$
or $1^\infty$ to the left or right we get at least one defect more in each case.
But we can see that ${\rm D}(w(01)^\infty)=4$, which means ${\rm D}_\infty(w)=4$.
\end{remark}

Note that the propositions that give us those upper bounds give also a method
to construct the infinite word which contains the original word $w$.
The actual constructions in Proposition \ref{pppp2} and Proposition \ref{pppp3}
may sometimes give a smaller defect than what the bound is always guaranteeing.

Note also that, as the next proposition says, we can extend a finite word to be both periodic and aperiodic infinite word
such that the defect of that infinite word is still finite.

\begin{proposition}
If $w$ is a finite word then we can extend it to be both periodic and aperiodic infinite word $v$
such that ${\rm D}(v)$ is finite.
\end{proposition}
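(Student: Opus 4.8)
The plan is to treat the two cases separately, in each arranging that $w$ is a \emph{prefix} of the infinite word $v$.

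The periodic case is essentially already contained in Theorem \ref{tttt1}. I would take $u=w^{(+)}$; the proof of that theorem shows ${\rm D}(u^\infty)={\rm D}_\infty(u^2)$, a finite number. Since $w$ is a prefix of its right palindromic closure $u$, it is a prefix of the periodic word $u^\infty$, so $v=u^\infty$ works.

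For the aperiodic case the idea is to combine Proposition \ref{pp3} with the prepending trick from the proof of Proposition \ref{pppp2}: find inside $w$ a short, non-unary rich suffix, extend that suffix to an infinite aperiodic rich word, and re-attach the finite missing prefix. Suppose first that $w$ is non-unary. Let $a^k$ be the longest suffix of $w$ that is a power of a single letter $a$; by maximality the letter $b$ immediately preceding it satisfies $b\neq a$. Put $s=ba^k$. A one-line count gives ${\rm Pal}(s)=\{\epsilon,b,a,a^2,\dots,a^k\}$, so $|{\rm Pal}(s)|=k+2=|s|+1$ and $s$ is rich and non-unary. By Proposition \ref{pp3} there is an infinite aperiodic rich word $r$ having $s$ as a prefix. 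Writing $w=w's$, I would set $v=w'r$. Then $v$ has $w$ as a prefix; $v$ is aperiodic, being a finite word prepended to the aperiodic word $r$; and ${\rm D}(v)$ is finite, because prepending one letter to a word raises its defect by at most $1$ (as ${\rm Pal}(x)\subseteq{\rm Pal}(ax)$ whenever $x$ is a suffix of $ax$), whence ${\rm D}(v)\le{\rm D}(r)+|w'|=|w'|$, exactly as in the proof of Proposition \ref{pppp2}. If instead $w=a^n$ is unary, aperiodicity forces a new letter: $a^nb$ is non-unary and rich (again $|{\rm Pal}(a^nb)|=n+2=|a^nb|+1$), so Proposition \ref{pp3} extends $a^nb$ to an infinite aperiodic rich word, which has $w=a^n$ as a prefix and defect $0$.

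The only points that are not automatic are, I expect: (i) realising that one cannot apply Proposition \ref{pp3} to $w$ itself — $w$ need not be rich, and by the remark following Theorem \ref{tttt1} it need not extend to \emph{any} rich word — so one must descend to a rich suffix; (ii) checking that re-attaching the left-over finite prefix keeps the defect finite, which is just the ${\rm D}(ax)\le{\rm D}(x)+1$ estimate already used for Proposition \ref{pppp2}; and (iii) the degenerate unary case. None of these is serious, so the main task is simply to organise the case split cleanly.
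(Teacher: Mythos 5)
Your proposal is correct and follows essentially the same route as the paper: the periodic word is $(w^{(+)})^\infty$ via Theorem \ref{tttt1}, and the aperiodic word comes from extending a non-unary rich suffix by Proposition \ref{pp3} and re-attaching the left-over prefix with the ${\rm D}(ax)\le{\rm D}(x)+1$ estimate of Proposition \ref{pppp2}. The only (harmless) differences are that you use the explicit rich suffix $ba^k$ instead of the longest rich suffix, and that you treat the unary case separately, which the paper leaves implicit.
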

\begin{proof}
The periodic infinite word $v$ can be constructed using Theorem \ref{tttt1}: $v=(w^{(+)})^\infty$.
The aperiodic infinite word $v$ can be constructed using Proposition \ref{pppp2} and Proposition \ref{pp3}:
we take the longest rich suffix and extend it to be aperiodic rich word.
\end{proof}

Next, let us look how the defect and the infinite defect can differ from each other. 
First we define some functions for an integer $n$:
\begin{align*}
{\rm D}(n) & = \max\{ {\rm D}(w)\ |\ w\ \text{is a word of length at most}\ n\}, \\
{\rm D}_\infty(n) & = \max\{ {\rm D}_\infty(w)\ |\ w\ \text{is a word of length at most}\ n\}, \\
{\rm D_{dif}}(n) & = \max\{ {\rm D}_\infty(w)-{\rm D}(w)\ |\ w\ \text{is a word of length at most}\ n\}.
\end{align*}
We clearly see that ${\rm D}(n)$ and ${\rm D}_\infty(n)$ are non-bounded growing functions.
For the function ${\rm D_{dif}}(n)$ we can prove the following inequality.

\begin{proposition}
${\rm D}_\infty(n)-{\rm D}(n) \leq {\rm D_{dif}}(n)$.
\end{proposition}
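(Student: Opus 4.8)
The plan is to reduce the inequality to the three defining maxima by working with a single extremal word. First I would fix a word $w$ with $|w|\le n$ that realizes the maximum defining ${\rm D}_\infty(n)$, so that ${\rm D}_\infty(w)={\rm D}_\infty(n)$. Such a $w$ exists because the maximum ranges over the (effectively finite) collection of words of length at most $n$, and by Theorem \ref{tttt1} each quantity ${\rm D}_\infty(w)$ is a well-defined finite number, so the maximum is attained.

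The key step is then the chain
$$
{\rm D}_\infty(n)={\rm D}_\infty(w)=\bigl({\rm D}_\infty(w)-{\rm D}(w)\bigr)+{\rm D}(w)\le {\rm D_{dif}}(n)+{\rm D}(n).
$$
Here the last inequality uses two facts about $w$, both of which hold simply because $|w|\le n$: the quantity ${\rm D}_\infty(w)-{\rm D}(w)$ is one of the values over which the maximum defining ${\rm D_{dif}}(n)$ is taken, hence it is at most ${\rm D_{dif}}(n)$; and ${\rm D}(w)$ is one of the values over which the maximum defining ${\rm D}(n)$ is taken, hence it is at most ${\rm D}(n)$. Rearranging the displayed inequality gives exactly ${\rm D}_\infty(n)-{\rm D}(n)\le {\rm D_{dif}}(n)$.

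I do not expect any real obstacle here: the statement is a formal consequence of the definitions. The only conceptual point worth highlighting in the write-up is that the word attaining the maximum in ${\rm D}_\infty(n)$ need not be the same word attaining the maximum in ${\rm D}(n)$ (nor need it be one with large defect difference), which is precisely why the right-hand side is ${\rm D_{dif}}(n)$ and the relation is an inequality rather than an equality. If desired, one could append a short remark noting that equality can fail, so that ${\rm D_{dif}}$ genuinely records more than the gap between the two extremal functions ${\rm D}_\infty(n)$ and ${\rm D}(n)$.
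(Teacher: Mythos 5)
Your proof is correct and follows essentially the same route as the paper: both pick a word $w$ with $|w|\le n$ attaining ${\rm D}_\infty(n)$, then combine ${\rm D}(w)\le {\rm D}(n)$ with the fact that ${\rm D}_\infty(w)-{\rm D}(w)$ is one of the values maximized in ${\rm D_{dif}}(n)$. The extra remarks on attainment of the maximum and on strictness of the inequality are fine but not needed.
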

\begin{proof}
Let us choose $w$ to be a word for which ${\rm D}_\infty(w)={\rm D}_\infty(n)$.
Now clearly ${\rm D}(w)\leq {\rm D}(n)$.
From these we get that ${\rm D}_\infty(n)-{\rm D}(n) \leq {\rm D}_\infty(w)-{\rm D}(w)\leq {\rm D_{dif}}(n)$.
\end{proof}

The previous proposition still does not guarantee that ${\rm D_{dif}}(n)$ would be non-bounded because
we do not know how much faster ${\rm D}_\infty(n)$ grows than ${\rm D}(n)$, if any.
But with a quite artificial construction we can prove it.

\begin{proposition}
The function ${\rm D_{dif}}(n)$ is a non-bounded growing function.
\end{proposition}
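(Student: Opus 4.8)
The plan is to exhibit, for each integer $k$, a word $w_k$ whose ordinary defect stays bounded (in fact equal to a small fixed constant, independent of $k$) while its infinite defect is forced to be at least $k$. Since such a $w_k$ has length growing with $k$, this shows $\mathrm{D_{dif}}(n)\ge \mathrm{D}_\infty(w_k)-\mathrm{D}(w_k)\to\infty$ as $n\to\infty$, which is exactly the claim.

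The construction I would use mimics the ``many compulsory extensions'' phenomenon from Remark \ref{rr1}, but applied on \emph{both} ends of the word simultaneously and in an incompatible way. Concretely, take a word of the shape $w_k = s_k\,c\,\overline{s_k}$, where $s_k$ is a short word (a single block like $0^k1$ or the staircase $0\,1\,0^2\,1\,0^3\cdots$) chosen so that $w_k$ is rich or nearly rich — this keeps $\mathrm{D}(w_k)$ bounded by a universal constant — but so that any right-infinite extension of $w_k$ is \emph{forced} through a long run of compulsory single-letter extensions, each of which (past some point) creates a brand-new defect, and symmetrically any left-infinite extension is forced the same way. The point of the central connector $c$ is to make the right-forced behaviour and the left-forced behaviour contradict each other, so that no two-way infinite extension can be ``cheap'' on both sides at once: whichever side you try to extend, you pay roughly $k$ new defects, hence $\mathrm{D}_\infty(w_k)\ge k$ while $\mathrm{D}(w_k)=O(1)$.

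The key steps, in order, are: (1) define $w_k$ explicitly and verify $\mathrm{D}(w_k)$ is bounded by a constant — this is a finite palindrome count, handled uniformly in $k$ by Proposition \ref{p3} (complete returns) exactly as in the earlier examples; (2) list the forbidden factors that cannot occur in any rich-or-low-defect extension of $w_k$, using Proposition \ref{pj} and the complete-return argument, just as in the proof of Proposition \ref{pp1}; (3) show that, because of those forbidden factors, a right-infinite extension of $w_k$ is pinned to a sequence of $\Theta(k)$ compulsory one-letter extensions and that — past the power already present in $w_k$ — each such step fails to produce a new unioccurrent longest palindromic suffix, hence adds a defect (this is the mechanism of Proposition \ref{pppp3} read in reverse); (4) run the mirror-image argument for left-infinite extensions; (5) observe that the right-side analysis and the left-side analysis demand incompatible things of the connector region, so a two-way infinite extension must incur the $\Theta(k)$ cost on at least one side; conclude $\mathrm{D}_\infty(w_k)\to\infty$ while $\mathrm{D}(w_k)$ stays bounded, and translate this into the statement about $\mathrm{D_{dif}}(n)$.

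The main obstacle I expect is step (5) together with the lower-bound half of step (3): it is easy to build a word that is expensive to extend in \emph{one} direction (Remark \ref{rr1} already does that), but one must rule out the escape route of extending in the \emph{other} direction cheaply, and also rule out clever non-periodic extensions that sidestep the compulsory runs. Making the two directions genuinely incompatible — so that the cheap left extension and the cheap right extension cannot be glued through the center $c$ — is the crux, and it is why the author calls the construction ``quite artificial.'' I would handle it by choosing $c$ so that the forbidden-factor list (step 2) leaves exactly one admissible letter adjacent to $c$ on each side, and those two forced letters are the ones that launch the long compulsory staircases; any attempt to be cheap on one side then immediately violates a forbidden factor coming from the other side. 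Once the incompatibility is nailed down, the defect bookkeeping in steps (3)–(4) is routine palindrome counting of the kind already carried out several times in this section.
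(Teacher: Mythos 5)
Your top-level framing is the right one and matches the paper: exhibit, for each $k$, a word $u_k$ with $\mathrm{D}_\infty(u_k)-\mathrm{D}(u_k)\ge k$. But the mechanism you propose for forcing the $k$ extra defects does not work, and this is the heart of the proof. What you need is a word such that, for $k$ consecutive steps and for \emph{every} choice of appended letter, the longest palindromic suffix of the extended word is non-unioccurrent, so that each step necessarily adds a defect. The compulsory-extension phenomenon of Remark \ref{rr1} is essentially the opposite of this: there the compulsory letter is precisely the one that \emph{does} create a new unioccurrent palindromic suffix and keeps the word rich. So an infinite extension that simply follows your compulsory staircase is cheap (it adds no defects at all), and an extension that deviates pays only a single defect, after which the word is no longer rich and the forbidden-factor machinery of Proposition \ref{pj} and Proposition \ref{pp1} --- which is a statement about \emph{rich} words --- no longer constrains anything. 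Your step (3), ``each compulsory step fails to produce a new unioccurrent lps,'' is therefore not something the cited tools can deliver; reading Proposition \ref{pppp3} ``in reverse'' gives an upper bound on $\mathrm{D}_\infty$, not the lower bound you need.

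The paper gets the lower bound by a different device: take $u=001^{k+1}01\,w\,101^{k+1}00$ where $w$ avoids $1^{k+1}$ but contains every palindrome of length at most $2k+2$ compatible with that restriction. Then during the first $k$ letters of any extension, a new palindromic suffix would either be short (length at most $2k+2$), hence already present in $w$ and not new, or would have to contain $1^{k+1}$, which the flanking blocks $10\cdots 00$ and the absence of $1^{k+1}$ in $w$ rule out. So \emph{every} letter appended in the first $k$ steps adds a defect, regardless of which letter it is --- no forcing, no case analysis on compulsory letters. Two further points where your plan diverges unnecessarily: the paper does not need $\mathrm{D}(u_k)$ bounded by a constant (its $w$ is long and may have large defect; only the \emph{difference} must be at least $k$, which is all $\mathrm{D_{dif}}$ measures), and the two-sided issue is handled by the symmetry of $u$ rather than by an ``incompatible connector'' --- the saturation argument applies verbatim to letters appended on either end, so there is no cheap escape direction to rule out.
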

\begin{proof}
It is trivially growing, so we only need to prove the non-boundedness.

Suppose to the contrary that there exists $k>1$ such that $\forall n: {\rm D_{dif}}(n) < k$ .
We will consider the word $u=001^{k+1}01w101^{k+1}00$, where $w$ is
a word which does not have $1^{k+1}$ as a factor but still has every
palindrome of length at most $2k+2$ as a factor that is possible under this restriction.
Now we need to prove that no matter how we extend $w$ to be an infinite word, the defect will always grow with at least $k$.

We cannot get a lower defect by extending the word to the both ends, so we suppose we extend it only to the right.
Every time we put a letter to the end of $u$ we cannot get a new palindromic factor that would contain $1^{k+1}$.
This comes from the fact that $w$ does not contain $1^{k+1}$ and the factor $1^{k+1}$ is preceded by $10$ and followed by $00$.
This means that after we have extended the word $u$ with $k$ letters we have not got any new palindromes
because every palindrome of length $2k+2$ or shorter is contained in $w$.
\end{proof}


\section{The number of rich words}

All binary words of length $7$ or shorter are rich. The shortest non-rich binary words are of length $8$
and there are four of them: $00101100$, $00110100$, $11010011$, $11001011$. Since all words are not rich
it is natural to study how many of them exist. This has not yet been done.

Let us mark with $r_k(n)$ the number of $k$-ary rich words of length $n$.
The next proposition states that there is an exponentially decreasing upper bound for $r_2(n)/2^n$
(i.e. the percentage of rich binary words from all binary words).

\begin{proposition}\label{ppp1}
The function $r_2(n)/2^n$ has an exponentially decreasing upper bound. 
\end{proposition}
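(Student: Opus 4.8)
The plan is to turn the existence of short non-rich words, recorded just above the statement, into an exponential deficiency. Fix the non-rich binary word $w=00101100$ of length $8$. By Proposition~\ref{p1}, any word containing a non-rich factor is non-rich, so every rich binary word avoids $w$ as a factor. Hence $r_2(n)\le g(n)$, where $g(n)$ denotes the number of binary words of length $n$ having no occurrence of $w$, and it is enough to produce constants $C>0$ and $\gamma<2$ with $g(n)\le C\gamma^{\,n}$, since then $r_2(n)/2^n\le C(\gamma/2)^n$ with $\gamma/2<1$.

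The key step is the recursion $g(n+8)\le (2^8-1)\,g(n)=255\,g(n)$ for all $n\ge 0$. To see it, send each length-$(n+8)$ word $v$ avoiding $w$ to the pair $(v',\beta)$ with $v=v'\beta$, $|v'|=n$, $|\beta|=8$. This map is injective, and its image is contained in the set of pairs $(v',\beta)$ such that $v'$ is a length-$n$ word avoiding $w$ and $\beta$ is a length-$8$ word with $\beta\neq w$: indeed $v'$ avoids $w$ since it is a factor of $v$, and $\beta=w$ would put an occurrence of $w$ inside $v$. As this set has at most $255\,g(n)$ elements, the recursion follows. Note the map is only injective, not bijective — a word $v'\beta$ with $v'$ avoiding $w$ and $\beta\neq w$ may still contain $w$ straddling the cut — but since we only want an upper bound this asymmetry is harmless; it is exactly why we get an inequality rather than an exact count, and I regard it as the only point in the proof that needs a moment's care.

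Iterating the recursion from $g(0)=1$ gives $g(8m)\le 255^{\,m}$, and using the trivial bound $g(k+1)\le 2g(k)$ (append one of two letters, the prefix still avoids $w$) we get, for $n=8m+r$ with $0\le r<8$,
\[ g(n)\le 2^{r}\,g(8m)\le 2^{8}\,255^{\,m}=2^{8}\,\gamma^{\,8m}=2^{8}\,\gamma^{\,n-r}\le 2^{8}\,\gamma^{\,n}, \]
where $\gamma=255^{1/8}$ and we used $\gamma\ge 1$. Since $255<2^8$ we have $\gamma<2$, so $r_2(n)/2^{n}\le g(n)/2^{n}\le 2^{8}\,(\gamma/2)^{\,n}$ with $\gamma/2<1$, which is the desired exponentially decreasing upper bound.

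I expect no real obstacle: the whole argument is just ``one forbidden factor already forces subexponential loss'', specialised to a forbidden word of length $8$. The same scheme applies verbatim over an alphabet of size $k\ge 2$, forbidding the same binary word $w$ and replacing $2$ by $k$ and $255$ by $k^{8}-1$, so $r_k(n)/k^{n}$ is exponentially decreasing as well; of course the resulting base $\gamma/2=255^{1/8}/2\approx 0.996$ is very far from the true decay rate, consistent with the remark in the introduction that the sharp count is governed by other considerations.
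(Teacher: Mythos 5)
Your proof is correct and uses essentially the same idea as the paper: split off the final length-$8$ block, note that at least one of the $2^8$ possible blocks is excluded (you forbid the single non-rich word $00101100$; the paper forbids all four non-rich words of length $8$, recursing directly on $r_2(n)\le 252\,r_2(n-8)$ to get the slightly sharper base $(63/64)^{1/8}$ instead of your $255^{1/8}/2$), and iterate to obtain exponential decay of the ratio. The quantitative difference is immaterial to the statement being proved.
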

\begin{proof}
First we give a recursive upper bound for $r_2(n)$.
We start with the exact initial values: $r_2(k)=2^k$ for $0\leq k\leq 7$.
Because there are four words of length $8$ that are not rich and every binary rich word can be extended richly at most two ways,
we get a recursive upper bound $r_2(n)\leq  2^8 r_2 (n-8)- 4 r_2 (n-8)=252 r_2 (n-8)$ for $n\geq 8$.
This kind of recursive function is easy to solve: $r_2(n)\leq 252^{\lfloor n/8 \rfloor}2^{n-8\lfloor n/8 \rfloor}$

Now we have that $r_2(n)/2^n \leq 252^{\lfloor n/8 \rfloor}2^{n-8\lfloor n/8 \rfloor} / 2^n
= 252^{\lfloor n/8 \rfloor}2^{-8\lfloor n/8 \rfloor}=(63/64)^{\lfloor n/8 \rfloor}$.
The function $(63/64)^{\lfloor n/8 \rfloor}$ decreases to $0$ exponentially when $n$ goes to infinity.
\end{proof}

Using the idea from the previous proof, we can trivially enhance the upper bound.
We just need to note that there are $16$ non-rich words of length $9$, $44$ non-rich words of length $10$,
$108$ non-rich words of length $11$ and $266$ non-rich words of length $12$, such that all the non-rich words
do not contain the shorter ones as a suffix (so they really do create completely new non-rich words).

\begin{corollary}\label{ccc1}
For $n\geq 12$ we have $r_2(n)\leq 2 r_2(n-1) - 4 r_2(n-8) - 16 r_2(n-9) - 44 r_2(n-10) - 108 r_2(n-11) - 266 r_2(n-12)$,
where we have the exact values of $r_2(k)$ for $0\leq k\leq 11$.
\end{corollary}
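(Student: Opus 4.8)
The plan is to iterate the counting idea behind Proposition \ref{ppp1}, but peeling off \emph{one} letter at a time and bookkeeping the non-rich extensions according to the length of the shortest non-rich suffix they produce.

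First I would record the basic one-step recursion. By Proposition \ref{p1} every rich word of length $n$ has a rich prefix of length $n-1$, so it arises by appending a letter to such a prefix; hence
\[
r_2(n) = 2\,r_2(n-1) - N(n),
\]
where $N(n)$ is the number of pairs $(u,a)$ with $u$ rich of length $n-1$, $a\in\{0,1\}$ and $ua$ non-rich. By Proposition \ref{p6} a rich word is never blocked in both directions, so $N(n)$ is also the number of non-rich words of length $n$ whose prefix of length $n-1$ is rich. Thus it suffices to prove
\[
N(n)\ \ge\ 4\,r_2(n-8)+16\,r_2(n-9)+44\,r_2(n-10)+108\,r_2(n-11)+266\,r_2(n-12)
\]
for $n\ge 12$; the coefficient $4$ already appears in Proposition \ref{ppp1} (the four non-rich words of length $8$), and $16,44,108,266$ are the counts quoted just before the statement, namely the numbers of non-rich words of length $9,10,11,12$ whose prefix of length one less is still rich, so that appending the last letter is exactly what destroys richness.

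Secondly, to get this lower bound on $N(n)$ I would produce, for each $m\in\{8,\dots,12\}$, a family of non-rich length-$n$ words with rich length-$(n-1)$ prefix. Fix one of the $c_m$ ``seed'' words $s$ of length $m$ above and a rich word $t$ of length $n-m$, and form $ts$; it is non-rich because it contains the non-rich factor $s$, and one wants its prefix $t\,s_1\cdots s_{m-1}$ of length $n-1$ to be rich, so that it is counted by $N(n)$. The families are then seen to be disjoint and counted without repetition by the remark that a word has at most one shortest non-rich suffix: from $ts$ one reads off $m$ as the length of that suffix, then $s$, then $t$, and a seed of length $m$ can never occur as a suffix of a seed of length $m'>m$. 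Summed over the $c_m$ seeds and the $r_2(n-m)$ choices of $t$, this yields the displayed inequality.

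The step I expect to be the real obstacle is showing that the prefix $t\,s_1\cdots s_{m-1}$ is rich for \emph{every} rich $t$: appending a fixed word to a rich word need not preserve richness, so the count $c_m\,r_2(n-m)$ is not immediate and must be argued by exploiting the special structure of the finitely many seeds $s$ of each length $8\le m\le 12$ (equivalently, by replacing $r_2(n-m)$ with the exact number of rich words of length $n-1$ ending in $s_1\cdots s_{m-1}$ and checking that the totals still dominate). Once that point is settled, combining the inequality with the displayed identity gives the stated recursion, the exact initial values $r_2(0),\dots,r_2(11)$ being obtained by direct enumeration exactly as in Proposition \ref{ppp1}.
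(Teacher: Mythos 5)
Your reconstruction is faithful to what the paper actually offers (only the one-sentence remark preceding the statement), and your reduction of the corollary to the inequality
$N(n)\ \ge\ 4r_2(n-8)+16r_2(n-9)+44r_2(n-10)+108r_2(n-11)+266r_2(n-12)$
is exactly the right way to frame it. But the step you flag as ``the real obstacle'' is not a technical point that can be patched by inspecting the finitely many seeds: it is a genuine and irreparable gap, because the inequality you reduced to is false. Indeed $N(n)=2r_2(n-1)-r_2(n)$ holds exactly, and the paper's own table of values gives $N(12)=2\cdot 1756-3246=266$, whereas the claimed lower bound at $n=12$ is $4\cdot 16+16\cdot 8+44\cdot 4+108\cdot 2+266\cdot 1=850$. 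Equivalently, the corollary asserts $r_2(12)\le 2662$ while $r_2(12)=3246$; the failure persists at $n=13$ ($4792$ versus $5916$) and beyond, so the statement itself is wrong as printed.

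The reason the count $c_m\,r_2(n-m)$ over-counts is precisely the one you identified: for a seed $s$ of length $m$ and a rich $t$ of length $n-m$, the word $ts$ is certainly non-rich, but its length-$(n-1)$ prefix $t\,s_1\cdots s_{m-1}$ is usually \emph{not} rich, so $ts$ is not among the $2r_2(n-1)$ words from which one is subtracting --- it was never counted in the first place and cannot be removed. The correct relation runs in the opposite direction: every word counted by $N(n)$ decomposes uniquely into a rich prefix followed by its (minimal) shortest non-rich suffix, which gives $N(n)\le\sum_{m\ge 8}c_m\,r_2(n-m)$ and hence a \emph{lower} bound on $r_2(n)$, not the stated upper bound. (The coefficients $4,16,44,108,266$ are consistent with this reading: they are exactly $N(8),\dots,N(12)$.) So your honest flagging of the obstacle locates exactly where the argument breaks, and no amount of case analysis on the seeds will rescue the recursion as stated; the most one can salvage from these counts is a bound of the form $r_2(n)\le 2^n-\sum_{m=8}^{12}c_m 2^{n-m}$, which is much weaker.
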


We can of course do the same for every size of the alphabet if we change few numbers from the upper proof. Suppose $k\geq3$.
All $k$-ary words of length $3$ or shorter are rich but all the words of form $0120$ are non-rich and
there are $k(k-1)(k-2)$ of them.

\begin{corollary}\label{ccc2}
The funtion $r_k(n)/k^n$ has an exponentially decreasing upper bound $(1-k(k-1)(k-2)/k^4)^{\lfloor n/4 \rfloor}$.
\end{corollary}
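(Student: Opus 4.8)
The plan is to run the argument behind Proposition~\ref{ppp1} almost verbatim, only replacing the four forbidden binary factors of length $8$ by the family of forbidden $k$-ary factors of length $4$. First I would record the two combinatorial facts already stated in the text just before the corollary: every $k$-ary word of length at most $3$ is rich (so $r_k(j)=k^j$ for $0\le j\le 3$), and each of the $k(k-1)(k-2)$ words of the form $abca$ with $a,b,c$ pairwise distinct is non-rich, since such a word has only the palindromic factors $\epsilon,a,b,c$, i.e. one short of $|abca|+1=5$. The count $k(k-1)(k-2)$ is just the number of ways to choose the ordered triple of distinct letters $a,b,c$, the fourth letter being forced; distinct triples clearly give distinct words.

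Next, for $n\ge 4$ I would set up the recursion. If $w$ is a $k$-ary rich word of length $n$, then by Proposition~\ref{p1} its length-$(n-4)$ prefix $v$ is again rich, and $w=vu$ with $|u|=4$; this decomposition is unique, so $r_k(n)$ equals the number of pairs $(v,u)$ with $v$ rich of length $n-4$, $|u|=4$, and $vu$ rich. For a fixed rich $v$, if $u$ is one of the $k(k-1)(k-2)$ words $abca$ above, then $vu$ contains the non-rich factor $u$ and is itself non-rich (again by Proposition~\ref{p1}); hence at most $k^4-k(k-1)(k-2)$ of the $k^4$ choices of $u$ can leave $vu$ rich, uniformly in $v$. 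Summing over the $r_k(n-4)$ admissible $v$ gives
\[
r_k(n)\le\bigl(k^4-k(k-1)(k-2)\bigr)\,r_k(n-4)\qquad(n\ge 4).
\]

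I would then iterate this down to the exact initial segment $0\le j\le 3$, where $r_k(j)=k^j$, obtaining $r_k(n)\le\bigl(k^4-k(k-1)(k-2)\bigr)^{\lfloor n/4\rfloor}k^{\,n-4\lfloor n/4\rfloor}$; for $0\le n\le 3$ this reads $r_k(n)\le k^n$, which is the equality already noted. Dividing by $k^n$,
\[
\frac{r_k(n)}{k^n}\le\left(\frac{k^4-k(k-1)(k-2)}{k^4}\right)^{\lfloor n/4\rfloor}=\left(1-\frac{k(k-1)(k-2)}{k^4}\right)^{\lfloor n/4\rfloor}.
\]
Finally, since $k\ge 3$ we have $1\le k(k-1)(k-2)<k^4$, so the base lies strictly between $0$ and $1$ and the right-hand side decays exponentially in $n$, which is exactly the assertion.

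There is no substantial obstacle: the proof is routine once the two facts in the first paragraph are in hand. The only points that need care are (i) that ``$k$-ary'' is being used here in the weak sense ``over an alphabet of at most $k$ letters'' — otherwise the clean initial values $r_k(j)=k^j$ would already fail for small $j$ — and (ii) that the $k(k-1)(k-2)$ forbidden suffixes are genuinely distinct and each forces non-richness of $vu$ for every rich $v$, which is what makes the passage from $k^4$ to $k^4-k(k-1)(k-2)$ legitimate uniformly and hence iterable.
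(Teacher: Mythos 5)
Your proof is correct and is essentially the argument the paper intends: it just reruns the recursion from Proposition~\ref{ppp1} with the initial values $r_k(j)=k^j$ for $0\le j\le 3$ and the $k(k-1)(k-2)$ non-rich words of the form $abca$ in place of the four non-rich binary words of length $8$. The paper leaves the corollary as an immediate adaptation ("change a few numbers from the upper proof"); you have simply written out that adaptation in full, including the correct observations that $abca$ has only four palindromic factors and that non-richness of the length-$4$ suffix forces non-richness of the whole word via Proposition~\ref{p1}.
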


For the lower bound of $r_k(n)$ we use Theorem \ref{tt1}. We know that every rich word can be extended richly at least one
way and from Theorem \ref{tt1} we get that after an extension of limited length we can extend it richly with two letters. 

\begin{proposition}\label{ppp2}
For $n\geq1$ we have $r_k(n)\geq r_k(n-1)+r_k(\lfloor n/3\rfloor)$, where $r_k(0)=1$.
\end{proposition}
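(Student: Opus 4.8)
The plan is to count the rich words of length $n$ by grouping them according to their prefix of length $n-1$. For a rich word $w$ let $e(w)$ denote the number of letters $a$ with $wa$ rich; Proposition \ref{p6} gives $e(w)\geq 1$ for every rich $w$. Let $B$ be the number of rich words of length $n-1$ that can be extended richly in at least two ways. Then
\[
r_k(n)=\sum_{\substack{|w|=n-1\\ w\text{ rich}}} e(w)\ \geq\ \bigl(r_k(n-1)-B\bigr)\cdot 1+B\cdot 2\ =\ r_k(n-1)+B ,
\]
so it suffices to prove $B\geq r_k(\lfloor n/3\rfloor)$. I would do this by exhibiting, for each rich word $v$ of length $m:=\lfloor n/3\rfloor$, a rich word $W(v)$ of length $n-1$ having $v$ as a prefix and extendable richly in two ways; since all the $W(v)$ have the same length and $v$ is recovered as their prefix of length $m$, the map $v\mapsto W(v)$ is automatically injective, giving $B\geq r_k(m)$. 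The cases $n\leq 2$ (where $m=0$) are checked directly: $r_k(1)=k\geq 2=r_k(0)+r_k(0)$ and $r_k(2)=k^{2}\geq k+1=r_k(1)+r_k(0)$.

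To build $W(v)$ for $m\geq 1$, split into two cases. If $v$ is unary, say $v=c^{m}$, take $W(v)=c^{\,n-1}$: it is rich, has $v$ as a prefix, and is extended richly both with $c$ (new palindrome $c^{\,n}$) and with any other letter $d$ (new palindrome $d$), hence lies in the set counted by $B$. If $v$ is non-unary, apply Theorem \ref{tt1} to obtain a rich word $vu$ with $|u|<2|v|$ that can be extended richly in two ways; then $|vu|<3m\leq n$, so $|vu|\leq n-1$. It remains to pad $vu$ up to length $n-1$ without losing the two-way extendability, and set $W(v)=vu\,a^{\,n-1-|vu|}$ for the appropriate letter $a$.

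The padding step is the heart of the argument, and it is where one must look inside the proof of Theorem \ref{tt1}. That proof produces $vu$ so that its suffix is $a^{n_{0}}$, where $a^{n_{0}}$ is the largest power of any letter occurring in $v$, and $vu$ is obtained from $v$ by iterated proper palindromic closures followed by truncations. The point to verify is that these operations never increase the length of the longest single-letter run of a non-unary word: a proper palindromic closure $x\mapsto \alpha\,\mathrm{lpps}(x)\,\bar\alpha$ can create a longer run only at the seam between $\mathrm{lpps}(x)$ and $\bar\alpha$, and an inspection of that seam shows the run produced there is no longer than one already present in $x$ — the only dangerous configuration, $\mathrm{lpps}(x)$ being a single-letter power immediately preceded by that same letter, cannot occur for non-unary $x$, since then the whole run would itself be a longer proper palindromic suffix. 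Consequently $a^{n_{0}}$ is a run of maximal length in $vu$ and its trailing run has length exactly $n_{0}$. Then, exactly as in the proof of Theorem \ref{tt1}, for every $j\geq 0$ the word $vu\,a^{j}$ is rich, ends with the run $a^{n_{0}+j}$ which is again of maximal length, and is extendable richly in two ways: one way is to append $a$ (giving the new palindrome $a^{n_{0}+j+1}$, which is unioccurrent because it is longer than every run in the word), the other is to append the letter immediately preceding $\mathrm{lpps}(vu\,a^{j})$, and this letter differs from $a$ precisely because $a^{n_{0}+j}$ is a maximal run. Taking $j=n-1-|vu|\geq 0$ yields the required $W(v)$.

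I expect the main obstacle to be exactly this run-length bookkeeping: making rigorous that the construction behind Theorem \ref{tt1} outputs a word whose last run is of maximal length, and that repeatedly appending copies of that letter preserves both richness and the existence of two distinct rich one-letter extensions. The remaining ingredients — the counting identity, the injectivity via the common prefix, and the unary and small-$n$ cases — are routine.
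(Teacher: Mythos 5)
Your proof is correct and follows essentially the same route as the paper: for each rich word $v$ of length $\lfloor n/3\rfloor$ one produces a distinct rich word of length $n-1$ with prefix $v$ that extends richly in two ways, and these contribute the extra $r_k(\lfloor n/3\rfloor)$ beyond the $r_k(n-1)$ one-way extensions. The paper's own proof is a two-line version of this that silently assumes the extension from Theorem \ref{tt1} can be padded to length exactly $n-1$ and does not mention the unary or small-$n$ cases; your run-length bookkeeping and the $v=c^m$ case supply precisely the details it omits.
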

\begin{proof}
Every rich word can extended richly at least in one way.
From Theorem \ref{tt1} we get that every rich word $w$ of length $\lfloor n/3\rfloor$ can be extended richly with at least two ways after it has
been extended with a proper word $u$ such that $|wu|=n$. These facts clearly give us our recursive formula when we notice that $r_k(0)=1$.
\end{proof}

\begin{figure}
\centering
\includegraphics[width=0.9\textwidth]{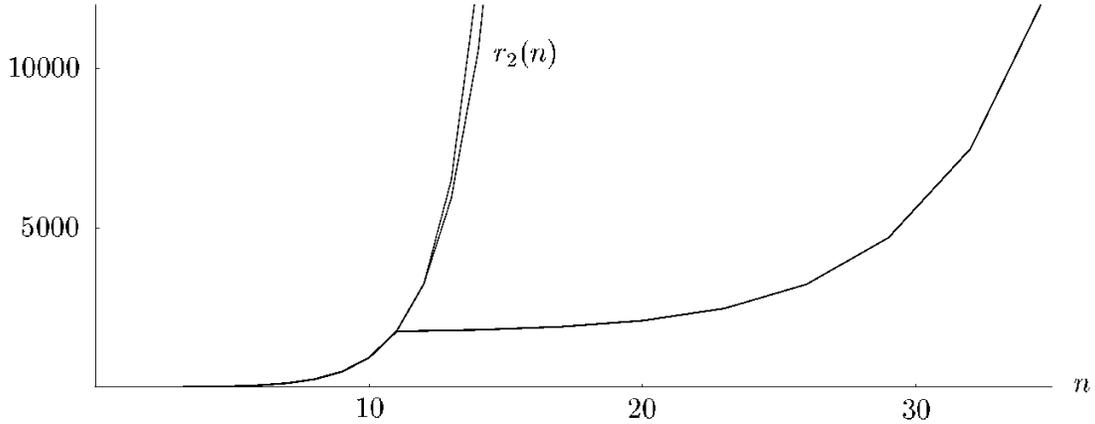}
\caption{The number of rich binary words and the bounds.}
\label{pic1}
\end{figure}

\begin{remark}
We do not know whether the function that comes from the
recursive formula of the lower bound is exponentially growing or not.
Note that we can improve the lower bound if we can improve Theorem \ref{tt1}.
Note also that Remark \ref{rr1} gives some limitations on improving it.
\end{remark}

For $0\leq n\leq 25$ the exact numbers of $r_2(n)$ are: 1, 2, 4, 8, 16, 32, 64, 128, 252, 488, 932, 1756,
3246, 5916, 10618, 18800, 32846, 56704, 96702, 163184, 272460, 450586, 738274, 1199376, 1932338, 3089518.
In Figure \ref{pic1} there are both the bounds and the 26 first exact numbers of rich binary words.
The picture would imply that the upper bound cannot be enhanced very much but the lower bound can be.
This means that there is much to improve in Theorem \ref{tt1}.


\section{Two-dimensional rich words}

Let $A$ be an alphabet. We define \emph{two-dimensional word} to be an infinite rectangular grid $\mathbb{Z}^2$
where every pair $(i,j)\in\mathbb{Z}^2$ gets a value from $A\cup\{\epsilon\}$.
We denote with $w(i,j)$ the letter (or the empty word)
from the pair of indices $(i,j)\in\mathbb{Z}^2$ of a two-dimensional word $w$.
Two-dimensional words have been studied in various papers, for example in \cite{bv}, \cite{ekm}, \cite{qz} and \cite{pa},
but these concepts have not been applied to rich words.
Here we define rich two-dimensional words and make few notions about them.

Two-dimensional word $w$ is \emph{rich} if for every $i,j\in\mathbb{Z}$ all the words
$w(i,j)$ $w(i,j+1)\cdots w(i,j+n)$ and $w(i,j)w(i+1,j)\cdots w(i+n,j)$ are rich,
where $n\geq0$ and $w(i,j+k),w(i+k,j)\neq\epsilon$ for every $k$ ($0\leq k\leq n$).

We say that a two-dimensional word $w$ can be extended to be a \emph{rich plane} if for every $(i,j)\in\mathbb{Z}^2$
for which $w(i,j)=\epsilon$ there exists a letter $a\in\textrm{Alph}(w)$ such that if we set $w(i,j)=a$
then the new two-dimensional word is rich.

\begin{example}
The two-dimensional words in Figure \ref{pic2} are both rich. The previous can trivially
be extended to be a rich plane by just adding letter $1$ for every empty spot, but the latter cannot.
This comes from the fact that we would be forced to extend the $8\times8$--square to the right so that the vertical word
becomes $00101100$, which is non-rich.
\end{example}

\begin{figure}[htb!]
\centering
\includegraphics[width=0.8\textwidth]{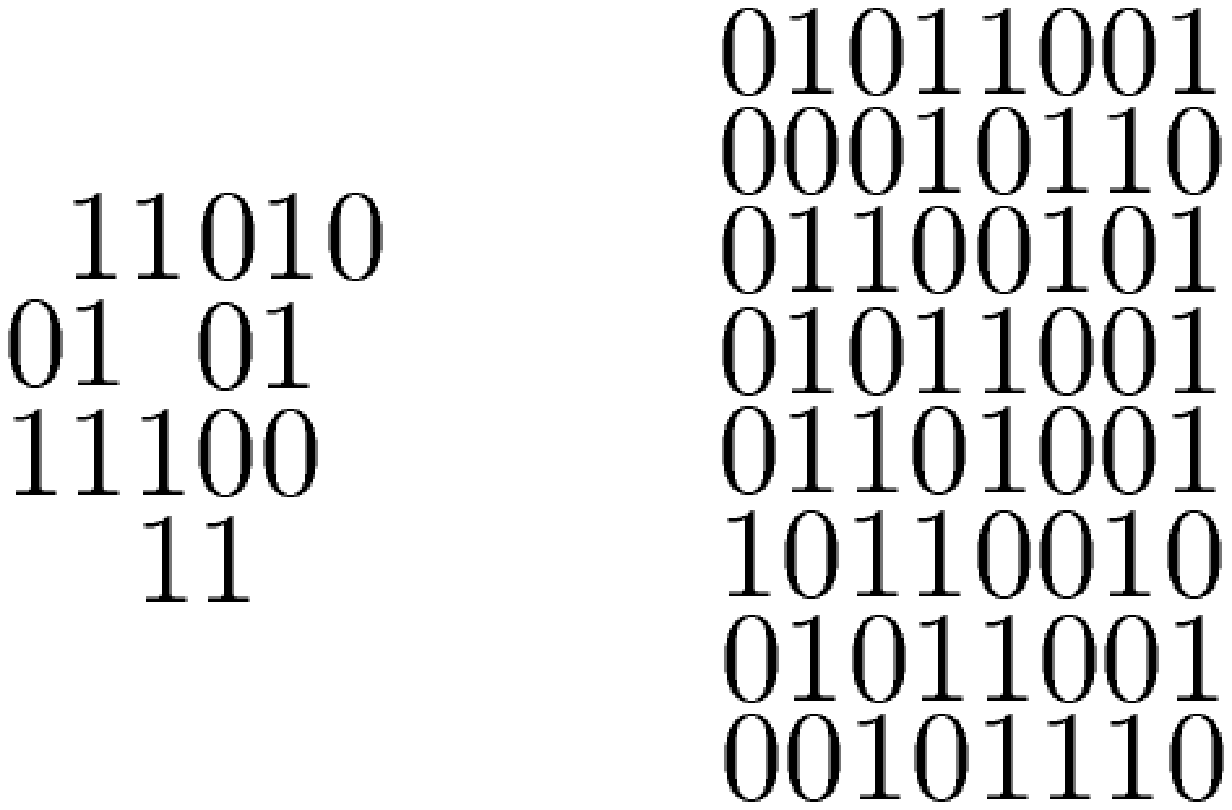}
\caption{Two rich two-dimensional words.}
\label{pic2}
\end{figure}

Let us suppose we have binary alphabet $\{0,1\}$. We saw that $8\times8$--squares cannot always be extended to be rich planes.
The next proposition states that every $6\times6$--square can always be extended to be a rich plane.
The $6\times6$--square does not need to be full with letters because every binary word of length $6$ or shorter is rich.

\begin{proposition}
If $w$ is a two-dimensional binary word such that every $w(i,j)\neq\epsilon$ is inside a $6\times6$--square,
then $w$ can be extended to be a rich plane.
\end{proposition}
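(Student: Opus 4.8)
The plan is to extend $w$ by a \emph{mirror tiling} of the given square, arranged so that every horizontal and every vertical line of the completed plane is itself the mirror tiling of a binary word of length $6$; richness of the whole plane then reduces to a single one-dimensional statement. After translating we may assume the $6\times6$--square occupies the cells $(i,j)$ with $1\le i,j\le 6$. Fill every $\epsilon$ lying in those cells by a fixed letter of $\textrm{Alph}(w)$, obtaining a full binary matrix $S(i,j)$, $1\le i,j\le 6$. Let $\mu:\mathbb{Z}\to\{1,\dots,6\}$ be the triangular-wave (reflection) map determined by $\mu(k)=k$ for $1\le k\le 6$, by $\mu(2-k)=\mu(k)$, and by $\mu(k+10)=\mu(k)$; thus $\mu$ reads $\dots,5,6,5,4,3,2,1,2,3,4,5,6,5,\dots$ and its local extrema (``turning points'') are exactly the positions $\equiv 1$ and $\equiv 6 \pmod{10}$, which lie at mutual distance $5$. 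Define the extension by $w(i,j)=S(\mu(i),\mu(j))$. On the square this returns $S$, so it agrees with the original $w$ wherever $w\neq\epsilon$, and it leaves no cell empty. For a fixed row index $i$, the horizontal line $j\mapsto S(\mu(i),\mu(j))$ is the bi-infinite word obtained from $v=\bigl(S(\mu(i),1),\dots,S(\mu(i),6)\bigr)$ by reflecting it, namely $\dots v_5 v_6 v_5 v_4 v_3 v_2 v_1 v_2 v_3 v_4 v_5 v_6 v_5\dots$; denote it $M(v)$. The same description holds for every vertical line, with $v$ a column of $S$. Hence it suffices to prove: \emph{for every binary word $v$ with $1\le|v|\le 6$, the bi-infinite word $M(v)$ is rich.}

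To prove this I would invoke Proposition \ref{p1} and show that every finite factor of $M(v)$ is rich. Factors of length at most $7$ are rich automatically, since every binary word of length at most $7$ is; in particular every factor of $M(v)$ that straddles exactly one turning point is a factor of one of $v,\ \bar v,\ \bar v v,\ v\bar v$, and one checks directly that no such factor can coincide with any of the four minimal non-rich words $00101100,\ 00110100,\ 11010011,\ 11001011$ — none of these is a palindrome, and each has distinct letters in positions $1$ and $3$, whereas a factor of $M(v)$ centred at a turning point has equal letters in those two positions. For the remaining (long) factors one uses that $M(v)$ is periodic with period $2(|v|-1)\le 10$ and is a reflection of itself about each of its turning points. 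A palindromic factor longer than a suitable threshold forces a local reflection symmetry of $M(v)$ and therefore can occur only centred at turning points; its complete returns then run from one turning point to the next at which the \emph{same} word recurs and are, being centred at a turning point about which $M(v)$ is symmetric, palindromes. Palindromic factors below the threshold are finite in number, and by periodicity each has only finitely many complete returns, all of bounded length, so these can be verified by inspection. By Proposition \ref{p3}, $M(v)$ is rich, and the claim — hence the proposition — follows.

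The main obstacle is exactly this last point: confirming that \emph{no} factor of the periodic word $M(v)$ breaks richness. Since $M(v)$ has period at most $10$, this ultimately reduces to a bounded computation for each of the finitely many binary words $v$ of length at most $6$; the content is to organise it structurally — pinning down which palindromic factors occur only at turning points and tracking their complete returns — rather than by brute force. Everything else, namely the mirror-tiling reduction and the fact that short factors are automatically rich, is routine. (If $w$ has no letter at all the statement is vacuous, and if $w$ is unary we simply fill the square, and with it the whole plane, by that single letter of $\textrm{Alph}(w)$.)
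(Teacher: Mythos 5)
Your mirror-tiling reduction is clean and correct as far as it goes: with $w(i,j)=S(\mu(i),\mu(j))$ every line of the completed plane is indeed $M(v)$ for $v$ a row or column of the filled square, so the proposition would follow from the single one-dimensional claim that $M(v)$ is rich for every binary $v$ with $|v|\le 6$. This is a genuinely different route from the paper, which instead classifies the four corner-types of the $6\times6$--square up to symmetry, fills the plane with large constant regions $0^\infty$ and $1^\infty$ separated by explicit rich extensions $w_1,\dots,w_4$ of the rows and columns, and then appeals to the richness of $0^\infty u1^\infty$, $0^\infty u01^\infty$ and $0^\infty 1u1^\infty$ for $|u|\le 4$. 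Your construction is more uniform and arguably more elegant.

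The problem is that the one-dimensional claim is never actually established, and you say so yourself: you defer it to ``a bounded computation'' that you do not perform and do not even fully specify. Two concrete defects. First, your argument that none of the four minimal non-rich words of length $8$ occurs in $M(v)$ is wrong as stated: the observation that positions $1$ and $3$ must carry equal letters only applies when the turning point sits at position $2$ of the factor, whereas a length-$8$ factor of $M(v)$ can contain a turning point at any of its positions (and can contain two of them, since they are only distance $5$ apart); one must check all ten windows of the period $v_1\cdots v_6v_5\cdots v_2$, not just one alignment. Second, and more seriously, excluding the length-$8$ non-rich words would not suffice anyway, since a word all of whose length-$8$ factors are rich can still contain a minimal non-rich factor of length $9$, $10$, etc. Your structural remedy --- that sufficiently long palindromic factors of $M(v)$ occur only centred at turning points and that their complete returns are then automatically palindromic --- is plausible but comes with no threshold, no proof of either assertion, and no treatment of the finitely many short palindromes whose returns you propose to check ``by inspection.'' Until that verification (over all $64$ words $v$, or $32$ up to complementation) is carried out, or the structural lemma is proved, the proof has a genuine gap precisely at its load-bearing step. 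To be fair, the paper's own auxiliary fact is also stated without proof, but it at least isolates a far smaller check; yours, as written, leaves the entire content of the proposition unverified.
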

\begin{proof}
The proof is constructional. We suppose that the corners of the $6\times6$--square of $w$ are $(1,1,1,1)$,
$(1,1,1,0)$, $(0,0,1,1)$ or $(0,1,0,1)$,
where the order of the corners is left lower, left upper, right upper and right lower.
Clearly all other possibilities are isomorphic in terms of the orientation of the plane and/or swapping the letters.
If the $6\times6$--square is not full, we can extend it to be full in any way we want.

We extend all the four possibilities in a way that is described in Figure \ref{pic3}.
The big letters $0$ and $1$ mean that the whole part of the plane is filled with that letter.
All the horizontal
\begin{figure}[htb!]
\centering
\includegraphics[width=0.6\textwidth]{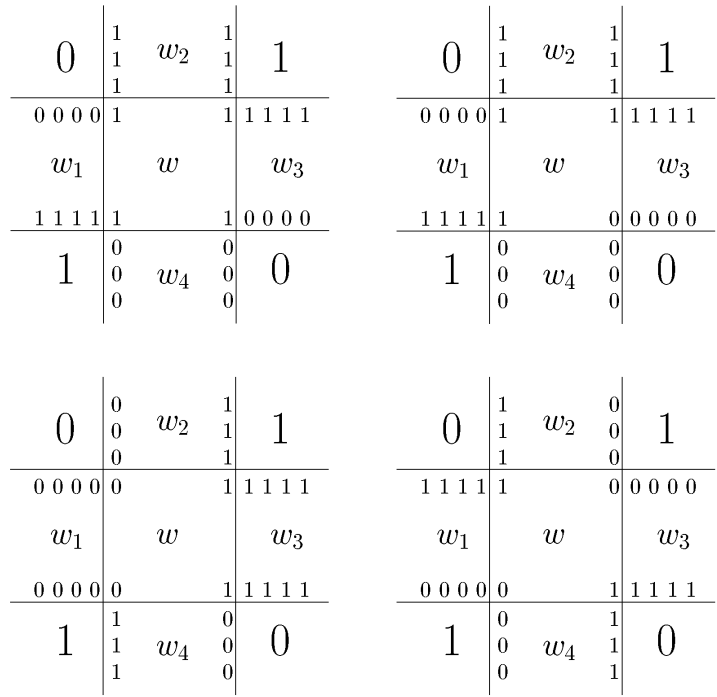}
\caption{The rich extensions of all $6\times6$--squares.}
\label{pic3}
\end{figure}
and vertical words inside the $6\times6$--square can be extened to be infinite rich words.
The words $w_1,w_2,w_3$ and $w_4$ are any words that satisfies this.

Now we can see that the whole plane is rich by using the fact that the words $0^\infty u 1^\infty$,
$0^\infty u 0 1^\infty$ and $0^\infty 1 u 1^\infty$ are always rich if $|u|\leq 4$.
\end{proof}

The next trivial proposition gives some other two-dimensional words that can be extended to be rich planes.
A finite word $w$ is \emph{strongly} rich if $w^\infty$ is rich. For strongly rich words, look \cite{gjwz} and \cite{rr}.

\begin{proposition}
Let $w$ be a two-dimensional word for which $w(i,j)\neq\epsilon$ only inside some rectangle.
If every $w(i,j)=\epsilon$ inside that rectangle can be chosen to be some letter such that every horizontal and vertical word is strongly rich,
then $w$ can be extended to be a rich plane.
\end{proposition}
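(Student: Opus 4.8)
The plan is to tile the plane with the given rectangle in a periodic fashion, so that the two-dimensional word becomes doubly periodic, and then read off each horizontal and vertical bi-infinite line as a periodic word whose period is strongly rich by hypothesis. First I would take the rectangle $R$ inside which all non-empty letters of $w$ sit, and by hypothesis fill every empty cell of $R$ so that every horizontal word and every vertical word contained in $R$ is strongly rich. Say $R$ has width $p$ (number of columns) and height $q$ (number of rows). I would then define the two-dimensional word $v$ on all of $\mathbb{Z}^2$ by setting $v(i,j) = R(i \bmod p, j \bmod q)$, i.e.\ periodically repeating the filled rectangle in both directions; this agrees with $w$ on the cells where $w$ is non-empty (after translating $R$ to the origin), so $v$ is a legitimate extension of $w$ to a plane with no empty cells.

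Next I would verify that $v$ is rich. Fix a row index $j$; the bi-infinite horizontal line $\cdots v(-1,j)\,v(0,j)\,v(1,j)\cdots$ is exactly $h^\infty$ (extended to the left as well) where $h$ is the horizontal word occupying row $j \bmod q$ of the filled rectangle $R$, which is strongly rich by construction, so $h^\infty$ is rich; hence every finite horizontal factor of $v$ is rich by Proposition~\ref{p1}. The identical argument with columns shows every finite vertical factor is rich, using that each column word of $R$ is strongly rich. By the definition of a rich two-dimensional word, $v$ is rich, and since $v$ fills every cell, $w$ has been extended to a rich plane.

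The only subtlety — and the place I would be most careful — is the bookkeeping about \emph{which} horizontal and vertical words are required to be strongly rich. The hypothesis says we may fill the interior of the rectangle so that "every horizontal and vertical word is strongly rich"; I read this as: for each of the $q$ rows the full-width word is strongly rich, and for each of the $p$ columns the full-height word is strongly rich. Under the periodic tiling, a bi-infinite line of $v$ is a power of precisely one of these, so the argument closes cleanly. If instead one only knew that \emph{some} sub-words are strongly rich, the tiling would not immediately give richness of the whole line, so this interpretation is essential; but it is the natural one, and with it the proof is a short two-step argument (periodic tiling, then Proposition~\ref{p1} applied line by line), which matches the proposition being labelled "trivial" in the excerpt.
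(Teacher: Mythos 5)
Your proof is correct and follows essentially the same route as the paper: fill the rectangle so that each row and column word is strongly rich, tile the plane periodically with the filled rectangle, and observe that every bi-infinite line is a power of a strongly rich word, hence rich by Proposition~\ref{p1}. You merely spell out the periodicity bookkeeping that the paper leaves implicit.
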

\begin{proof}
We simply extend $w$ to be strongly rich inside that rectangle and then we just replicate
it and fill the whole plane so that the rectangles are side by side. Clearly the two-dimensional word that we get is rich.
\end{proof}

The problem whether a given two-dimensional word $w$ can be extended to be a rich plane is of course semi-decidable,
i.e. there exists an algorithm that gives "yes" if the word cannot be extended.
It just tries every possible way to fill the empty indices $(i,j)\in\mathbb{Z}^2$ of $w$ in some order.
If at some point there is no possible choice, the algorithm halts and returns "yes".
The problem is although probably not decidable.


\section{Open problems}

Here we list some open problems from the previous sections.

\begin{openprob}
Let $w$ be a rich word. How long is the shortest $u$ such that $wu$ can always be extended at least in two ways?
\end{openprob}

\begin{openprob}
Is the condition in Proposition \ref{pj} sufficient for joining two rich words $u$ and $v$ into factors of a same rich word?
\end{openprob}

\begin{openprob}
Is the following problem decidable: "Determine the infinite defect of a given finite word $w$"?
\end{openprob}

\begin{openprob}
Is the funtion $r_k(n)$, i.e. the number of rich words, exponentially increasing?
\end{openprob}

\begin{openprob}
Can every $7\times 7$--square be extended to be a rich plane?
\end{openprob}

\begin{openprob}
Is the following problem decidable: "Given a two-dimensional word $w$, can it be extended to be a rich plane"?
\end{openprob}


\section*{Acknowledgements}

I would like to thank my supervisors Luca Zamboni and Tero Harju for several fruitful discussions and for introducing me to rich words.

%
%
%
%
%
%
%




\end{document}